\theoremstyle{theorem}
\newtheorem{theorem}{Theorem}[section]
\newtheorem{lemma}[theorem]{Lemma}
\newtheorem{corollary}[theorem]{Corollary}
\newtheorem{conjecture}[theorem]{Conjecture}
\newtheorem{proposition}[theorem]{Proposition}
\newtheorem{claim}[theorem]{Claim}
\theoremstyle{definition}
\newtheorem{remark}[theorem]{Remark}
\title{Weak saturation numbers of large complete
bipartite graphs}
\date{}
\begin{document}

\author{Margarita Akhmejanova\footnote{Umeå universitet, margarita.akhmejanova@umu.se}, \, Ilya Vorobyev\footnote{
IOTA Foundation, Berlin, Germany, ilia.vorobev@iota.org}, \, Maksim Zhukovskii \footnote{School of Computer Science, University of Sheffield, UK, m.zhukovskii@sheffield.ac.uk
}}

\maketitle

\textbf{Abstract.}  An $n$-vertex graph $G$ is weakly $F$-saturated if $G$ contains no copy of \( F \) and there exists an ordering of all edges in \( E(K_n) \setminus E(G) \) such that, when added one at a time, each edge creates a new copy of \( F \). The minimum size of a weakly \( F \)-saturated graph \( G \) is called the weak saturation number $\mathrm{wsat}(n, F)$. We obtain exact values and new bounds for $\mathrm{wsat}(n, K_{s,t})$ in the previously unaddressed range $s+t < n < 3t-3$, where $3\leq s\leq t$. To prove lower bounds, we introduce a new method that takes into account connectivity properties of subgraphs of a complement $G'$ to a weakly saturated graph $G$. We construct an auxiliary hypergraph and show that a linear combination of its parameters always increases in the process of the deletion of edges of $G'$. This gives a lower bound which is tight, up to an additive constant.

\bigskip
\textbf{Keywords:} weakly saturated graph, weak saturation number, complete bipartite graph.

\section{Introduction}

Let \(F\) be a fixed graph. An $n$-vertex graph \(G\) is said to be \emph{weakly \(F\)-saturated} if it is \(F\)-free and there exists an ordering \(e_1, e_2, \ldots, e_m\) of the edges in \(E(K_n) \setminus E(G)\) such that, for each \(i \in \{1, \ldots, m\}\), the graph \(G \cup \{e_1, \ldots, e_i\}\) contains a copy \(F_i\) of \(F\) with \(e_i \in E(F_i)\). The \emph{weak saturation number} \(\mathrm{wsat}(n,F)\) is the minimum number of edges in a weakly \(F\)-saturated graph. This notion was introduced by Bollob\'{a}s~\cite{Bol86}, motivated by a related notion of (strong) saturation in hypergraphs. Bollob\'{a}s conjectured that for \( s \geq 2 \),
\begin{equation}
\mathrm{wsat}(n, K_s) = \binom{n}{2} - \binom{n - s + 2}{2},
\label{Bol:conjecture}
\end{equation}
with equality achieved for a graph $G$ obtained from $K_n$ by excluding a clique \( K_{n-s+2} \). The conjecture was proved independently by Lov\'{a}sz \cite{Lovasz77}, Frankl \cite{FRANKL1982125}, Alon \cite{ALON198582}, and Kalai \cite{Kalai}. Notably, a similar conjecture for strongly saturated graphs was proved by Erd\H{o}s, Hajnal, and Moon \cite{Erdos-Hajnal-Moon}, and later generalised by Bollob\'{a}s to hypergraphs \cite{Bollobs1965OnGG}. In~\cite{Kalai}, Kalai proved~\eqref{Bol:conjecture} via a general linear algebraic approach which was further applied in many other scenarios (see an overview of this approach and its applications, e.g., in~\cite{TZ:combi}) --- in particular, for complete bipartite graphs $F$, as we discuss below.

A significant number of papers focus on the weak saturation number for bipartite graphs $F$. Borowiecki and Sidorowicz~\cite{Borowiecki2002} proved that $\mathrm{wsat}(n, K_{1,t}) = \binom{t}{2}$ for $n > t+1$, and a shorter proof of this result was later given by Faudree, Gould, and Jacobson~\cite{Faudree2013}. Moreover, Borowiecki and Sidorowicz~\cite{Borowiecki2002} proved that $\mathrm{wsat}(n, K_{2,2}) = n$ for all $n > 4$, and Faudree, Gould, and Jacobson~\cite{Faudree2013} established $\mathrm{wsat}(n, K_{2,3}) = n+1$ for all $n > 5$. Fairly recently, Miralaei, Mohammadian, and Tayfeh-Rezaie~\cite{Miralaei2023} determined $\mathrm{wsat}(n, K_{2,t})$ for all $n > t+2$: they proved that, for all $t\geq 3$, if $n\geq 2t-1$ or $t+2<n\leq 2t-2$ and $t$ is odd, then $\mathrm{wsat}(n, K_{2,t})=n-2+{t\choose 2}$ and, if $t+2<n\leq 2t-2$ and $t$ is even, then $\mathrm{wsat}(n, K_{2,t})=n-1+{t\choose 2}$. This completed the investigation of $\mathrm{wsat}(n,K_{s,t})$ for all $s\leq 2$. We note that obtaining the {\it exact} value of $\mathrm{wsat}(n,K_{2,t})$ is much more challenging than determining its asymptotics, up to a constant additive term. Indeed, it is known that $\mathrm{wsat}(n,F)=n+\Theta(1)$ for all connected graphs $F$ with minimum degree 2, see~Claim 1.3~in~\cite{TZ:combi} and the discussion in front of this claim.

Using symmetric algebras, Kalai~\cite{Kalai} found the weak saturation number for balanced bipartite graphs $F$ for all large enough $n$: if \( s \geq 2 \) and \( n \geq 4s - 4 \),
$$
\mathrm{wsat}(n, K_{s, s}) = (s-1)\left(n + 1 - \frac{s}{2}\right).
$$
Later, using a similar approach, Kronenberg, Martins, and Morris \cite{Kronenberg_2021} extended this result to \( s \geq 2 \) and \( n \geq 3s - 3 \). Additionally they showed that for these values of \( s \),
$$
\mathrm{wsat}(n, K_{s, s+1}) = (s-1)\left(n + 1 - \frac{s}{2}\right) + 1.
$$
Also, for \( 2\leq s < t \), they estimated \( \mathrm{wsat}(n, K_{s, t}) \) up to an additive constant $C=C(s,t)$ that does not depend on $n$:
\begin{align*}%\label{Martins2}
 \mathrm{wsat}(n, K_{s, t})\leq (s-1)(n - s) + \frac{t(t-1)}{2} & \quad \text{ if }n\geq 2(s+t)-3; \\
 \mathrm{wsat}(n, K_{s, t})\geq (s-1)(n - t + 1) + \frac{t(t-1)}{2} &\quad\text{ if }n\geq 3t-3.
\end{align*}

In this paper, we focus on the case of small $n$ --- namely, $s+t\leq n< 3t-3$, which is almost entirely open. The only exception is the following recent result of Miralaei, Mohammadian, and Tayfeh-Rezaie~\cite{Miralaei2023} resolving the case $n=s+t$:
$$
\mathrm{wsat}(s+t, K_{s,t}) =
\begin{cases}
\binom{s+t-1}{2}, & \text{if }\gcd(s,t)=1,\\[6pt]
\binom{s+t-1}{2}+1, & \text{otherwise}.
\end{cases}
$$
%thereby correcting an earlier claim in the literature.

The intermediate range \( s + t < n < 3t-3 \) appears to be substantially more challenging. In particular, the construction underlying the upper bound in the case of large $n$ no longer applies and the answer becomes significantly different from \((s-1)n\). In such a case, the linear-algebraic method seems to be inefficient --- in particular, the optimal bound for the dimension of a suitable vector space is supposedly too large to apply the general framework explained in~\cite[Theorem 2.6]{TZ:matroids}. We also note that in~\cite{TZ:matroids} it was proved that, for certain graphs $F$, the best possible lower bound achieved by the linear algebraic approach is strictly less than $\mathrm{wsat}(n,F)$. We expect it is also the case here, although we fail to prove it.

We develop a new combinatorial framework that allows us to solve the case \( n = s + t + 1 \) and to obtain new non-trivial bounds for larger \( n \).

\begin{theorem}\label{result_1}
For all \( s > 2 \),
\[
\mathrm{wsat}(2s+1, K_{s, s}) = \binom{2s+1}{2} - (4s - 4).
\]
\end{theorem}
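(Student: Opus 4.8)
The plan is to prove the matching upper and lower bounds on $\mathrm{wsat}(2s+1,K_{s,s})$ separately, working with the complement $G'=\overline{G}$ (taken inside $K_{2s+1}$) of a candidate graph $G$; the goal is that an optimal $G'$ should have exactly $4s-4$ edges. First I would record a dictionary: since every copy of $K_{s,s}$ in $K_{2s+1}$ omits exactly one vertex, $\overline{H}$ contains $K_{s,s}$ iff for some vertex $v$ the components of $H-v$ split into two groups of total size $s$ each; and, for $xy\notin E(H)$, the graph $\overline{H}\cup\{xy\}$ contains a new copy of $K_{s,s}$ through $xy$ iff for some $v\notin\{x,y\}$ the edge $xy$ is the unique edge of $H$ crossing some balanced bipartition $(A,B)$ of $V\setminus\{v\}$ with $x\in A$, $y\in B$. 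Thus weak $K_{s,s}$-saturation of $G$ is equivalent to: $\overline{G'}$ is $K_{s,s}$-free, and the edges of $G'$ admit an ordering along which each deleted edge is, at the moment of deletion, the unique crossing edge of such a balanced bipartition of some $V\setminus\{v\}$.

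For the upper bound I would take $G'=D_1\sqcup D_2$, where $D_2$ on $s+2$ vertices consists of two hubs $a,b$ joined by an edge together with $s$ pendants each joined to both hubs (so $D_2=K_{2,s}$ plus the edge $ab$, with $2s+1$ edges), and $D_1$ is a graph on the remaining $s-1$ vertices with $2s-5$ edges that stays connected after the deletion of any single vertex (for $s=3$ take $D_1=K_2$, for $s=4$ take $D_1=K_3$; in general $2s-5\le\binom{s-1}{2}$, so a suitable $D_1$ exists). Then $\lvert E(G')\rvert=(2s+1)+(2s-5)=4s-4$. Because $D_1,D_2$ are connected of sizes $s-1$ and $s+2$ and stay connected after a single vertex deletion, the components of $G'-v$ always have sizes $\{s-2,s+2\}$ or $\{s-1,s+1\}$, neither of which admits a subset of sum $s$, so by the dictionary $\overline{G'}$ is $K_{s,s}$-free. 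For the ordering I would first peel the pendants of $D_2$: to delete $ap_i$ remove $v=b$, making $p_i$ isolated in $D_2-b-ap_i$; then $D_1\cup\{p_i\}$ and $\{a\}\cup\{p_j:j\ne i\}$ are balanced classes with $ap_i$ the only crossing edge, and $bp_i$ is handled symmetrically with $v=a$. After all $2s$ pendant edges are gone I delete the hub edge $ab$ (with $v$ a now-isolated pendant, one class being $D_1\cup\{a\}$ and the other $\{b\}$ together with $s-1$ of the isolated pendants), and finally peel the edges of $D_1$ in a suitable order, using the $s+2$ isolated vertices as ballast to equalise the two classes at each step (for an edge of $D_1$ lying on a cycle, choosing $v$ so that this edge becomes a bridge after $v$ is removed). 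This realises the full process and gives $\mathrm{wsat}(2s+1,K_{s,s})\le\binom{2s+1}{2}-(4s-4)$.

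For the lower bound I would use the monovariant method announced in the abstract. Given weakly saturated $G$ with $G'=\overline{G}$, fix a valid deletion order $G'=G'_0\supsetneq\cdots\supsetneq G'_M=\varnothing$, $M=\lvert E(G')\rvert$. To each $G'_c$ associate an auxiliary hypergraph $\mathcal H_c$ on vertex set $V$ whose hyperedges encode the connected subsets of $G'_c$ of the relevant sizes, and set $\Phi(G'_c)=\sum_i\alpha_i p_i(\mathcal H_c)+\beta\,c(G'_c)$ for fixed weights $\alpha_i,\beta$, where $p_i$ counts size-$i$ hyperedges and $c$ counts connected components. The central lemma is that each valid deletion step raises $\Phi$ by at least $1$: deleting $xy$ with witness $v$ and balanced bipartition $(A,B)$ forces a prescribed change in the component/subset structure of $G'_c$ around $xy$, and the weights are chosen so that $\Phi$ strictly increases in every case. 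Granting this, $M\le\Phi(\varnothing)-\Phi(G')$; one then computes $\Phi(\varnothing)$ directly and bounds $\Phi(G')$ from below using that $\overline{G'}$ is $K_{s,s}$-free (the component sizes of each $G'-v$ avoid all $s$-$s$ splittings, which pins down enough of the structure of $\mathcal H_0$), obtaining $\Phi(\varnothing)-\Phi(G')\le 4s-4$. Hence $\lvert E(G')\rvert\le 4s-4$, i.e.\ $\lvert E(G)\rvert\ge\binom{2s+1}{2}-(4s-4)$, which matches the construction.

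The hard part will be the monotonicity lemma together with the exact calibration of the weights $\alpha_i,\beta$. One must arrange simultaneously that (i) $\Phi$ strictly increases at every legal deletion --- in particular when the deleted edge lies on a cycle of $G'_c$, so that the number of components is unchanged and the hypergraph must detect a subtler change --- and (ii) the total increase over any valid process is at most $4s-4$ and not merely bounded by some unspecified function of $s$. Reconciling these, and carrying out the case analysis behind (i) over all ways a new copy of $K_{s,s}$ can appear, is the delicate step; on the construction side the analogous, more routine, point is to verify by induction on the number of remaining edges that the final peeling of $D_1$ never gets stuck, which should follow from the fact that strictly more than $s$ ballast vertices remain available throughout.
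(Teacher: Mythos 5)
Your upper bound is a genuinely different construction from the paper's (the paper uses a $4$-cycle $(a,b,c,d)$ with $a$ dominating an $(s-1)$-vertex path and $b$ dominating an $(s-2)$-vertex path), and your $D_2$-peeling and the $K_{s,s}$-freeness check via component sizes are fine. But the existence claim for $D_1$ is not justified by ``$2s-5\le\binom{s-1}{2}$ plus 2-connectivity'': a graph whose complement admits the required deletion ordering must in particular contain no $3$-connected subgraph (if $R$ has a $3$-connected subgraph $H$, then for the first edge $e$ of $H$ to be deleted, $e$ still lies on $j+2=3$ internally disjoint paths, so no single closed vertex $v$ can make $e$ the unique crossing edge --- this is exactly Claim 5.3 of the paper specialised to $j=1$). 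A $2$-connected graph on $s-1$ vertices with $2s-5$ edges can contain $K_4$, and such a $D_1$ is not peelable. The gap is fixable --- e.g.\ take $D_1\cong K_{2,s-3}$ plus an edge between the two hubs, which has exactly $2s-5$ edges and peels by the same routine as your $D_2$ --- but as written you have not exhibited a valid $D_1$ for general $s$.

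The lower bound is where the real gap lies. What you give is a restatement of the paper's announced strategy (auxiliary hypergraphs, a weighted semi-invariant $\Phi$, per-step increase $\ge 1$) with all of the content deferred to ``the hard part''. Concretely: (i) you never define the hypergraph $\mathcal H_c$; the paper's hyperforest, maintained via separate edge- and vertex-operations, requires a nontrivial inductive proof that it stays a hyperforest covering all edges with connected hyperedges; (ii) you never fix the weights; the paper uses $\Phi=f+2c$ (number of hyperedges plus twice the number of components), and with this choice the endpoint difference is $\Phi(\varnothing)-\Phi(G'_0)\le 2(2s+1)-3=4s-1$, which is $3$ short of the target; (iii) consequently the decisive step is not the monotonicity lemma but the proof that the total \emph{excess} $Q=\sum_i(\Phi_i-\Phi_{i-1}-1)$ is at least $3$, which in the paper occupies a long case analysis (Lemma 3.1 on three special hyperforest shapes, followed by a branching on the first few increments $(f_1,c_1)\in\{(2,1),(2,2),(3+\lambda,1),\dots\}$). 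Your proposal instead hopes that some calibration of weights makes $\Phi(\varnothing)-\Phi(G'_0)\le 4s-4$ outright while preserving the per-step increase of $1$; no such calibration is exhibited, and nothing in your sketch substitutes for the excess argument. As it stands the lower bound is a plan, not a proof.
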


When $s\neq t$ and $\mathrm{gcd}(s,t)\neq 1$, our method yields bounds that differ by 1. Although we expect a more involved analysis might give the tight answer in this case, the main purpose of this paper is to introduce the new method. Thus, we have decided to present a more transparent proof in this paper and defer the technical details to future work.

\begin{theorem}\label{result_2}
For integers \( t, s \) such that \(t > s > 2 \),
\begin{align*}
\mathrm{wsat}(s+t+1, K_{s, t})=\binom{s + t + 1}{2} - (2s + 2t - 2), &\quad\text{ if }\mathrm{gcd}(s,t)=1;\\
2s + 2t - 3\leq\binom{s + t + 1}{2}-\mathrm{wsat}(s+t+1, K_{s, t})\leq 2s + 2t - 2,
&\quad\text{ otherwise.}
\end{align*}
\end{theorem}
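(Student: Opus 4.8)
The plan is to pass to complements. For a graph $G$ on $n=s+t+1$ vertices set $G'=\overline G$; then $\mathrm{wsat}(n,K_{s,t})=\binom n2-\max|E(G')|$, the maximum over \emph{admissible} $G'$, meaning those for which $\overline{G'}$ is $K_{s,t}$-free and the edges of $G'$ can be ordered so that deleting them one by one — equivalently, adding them one by one to $\overline{G'}$ — each time creates a copy of $K_{s,t}$ through the new edge. Since a copy of $K_{s,t}$ spans exactly $n-1$ vertices, $K_{s,t}$-freeness of $\overline{G'}$ means precisely that for every vertex $w$ the multiset of sizes of the connected components of $G'-w$ has no subset summing to $s$; and deleting an edge $uv$ from the current $G'$ is legitimate exactly when, for some $w\notin\{u,v\}$, the set $V\setminus\{w\}$ splits into an $s$-set $S\ni u$ and a $t$-set $T\ni v$ with $uv$ the only edge of $G'$ between $S$ and $T$ — equivalently, $uv$ is a bridge of $G'-w$ whose two sides, together with the remaining components of $G'-w$, regroup into such $S$ and $T$. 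I will call this deletion process \emph{peeling}; an admissible $G'$ is peeled all the way down to the edgeless graph, so the number of steps equals $|E(G')|$, and the theorem amounts to saying that the maximum size of an admissible $G'$ is $2s+2t-2$ when $\gcd(s,t)=1$ and lies in $\{2s+2t-3,\,2s+2t-2\}$ otherwise.

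For the upper bound on $\mathrm{wsat}$ I would exhibit an explicit admissible $G'$ with $2s+2t-2$ edges (one fewer when $\gcd(s,t)>1$) together with a peeling order. The graph should be only barely well-connected — assembled from a few sparse, breakable $2$-connected pieces (short cycles and paths) linked through one vertex and one bridge, with the link vertex carrying enough extra edges to reach the required size — so that deleting any single vertex leaves a graph that is either connected or has a component of size larger than $t$; this makes $\overline{G'}$ automatically $K_{s,t}$-free. The peeling starts by removing the linking bridge, which separates $G'$ into balanced pieces, and then proceeds by a recursion mirroring the subtractive Euclidean algorithm on the pair $(s,t)$: at each stage one cuts a current segment at the unique position whose two sides, padded by the remaining pieces, recombine to sizes $s$ and $t$. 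This recursion reaches the all-singletons (hence edgeless) graph precisely when $\gcd(s,t)=1$; when $\gcd(s,t)=d>1$ it stalls with all segment lengths divisible by $d$, which costs one edge in the construction and accounts for the dichotomy in the theorem.

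For the lower bound on $\mathrm{wsat}$ — the heart of the argument — I would use the new combinatorial method. To each graph $H$ on the vertex set $V$ one associates an auxiliary hypergraph $\mathcal H(H)$ that records the connectivity data of the deleted-vertex graphs $H-w$ (roughly, the vertex sets that occur as unions of connected components of $H-w$ of the sizes relevant to an $(s,t)$-split), and defines a potential $\Phi(H)$ as a carefully weighted nonnegative integer combination of parameters of $\mathcal H(H)$: numbers of hyperedges, of connected components, of ``free'' vertices, and the like. The two facts to prove are: (i) every legitimate peeling step strictly increases $\Phi$; and (ii) $K_{s,t}$-freeness of $\overline{G'}$ forces $\Phi(G')\ge\Phi(\varnothing)-(2s+2t-2)$ for every admissible $G'$. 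Because peeling an admissible $G'$ takes $|E(G')|$ steps and ends at the edgeless graph, (i) gives $|E(G')|\le\Phi(\varnothing)-\Phi(G')$, and combining with (ii) yields $|E(G')|\le 2s+2t-2$, i.e.\ $\mathrm{wsat}(n,K_{s,t})\ge\binom n2-(2s+2t-2)$, with no gcd hypothesis needed. Together with the constructions this establishes both cases of the theorem.

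The main obstacle I anticipate is proving (i): that $\Phi$ increases at \emph{every} legitimate peel, not merely in typical cases. Deleting one bridge of a single graph $G'-w$ perturbs the component structure globally and therefore alters $\mathcal H(H)$ relative to all the other choices of omitted vertex; the weights in $\Phi$ must be tuned so that the inevitable local losses in some parameters are always dominated by gains in others, and this seems to demand a fairly intricate case analysis depending on how the deleted bridge sits relative to the other separations encoded in $\mathcal H$. A secondary difficulty lies on the construction side — verifying that the Euclidean-type peeling order can indeed be completed, particularly the cleanup once the segments become short and the removal of the edges at the linking vertex in an order keeping all the intermediate $(s,t)$-splits balanced.
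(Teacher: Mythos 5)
Your reduction to the complement (the paper's ``erase process''), your upper-bound constructions, and your Euclidean peeling order are essentially what the paper does: the extremal graph is two paths, each dominated by an apex, the two apexes joined by an edge, plus one extra edge from an apex to the far end of the other path; the peeling cuts segments at positions determined by the subtractive Euclidean algorithm on $(s,t)$, terminating at singletons exactly when $\gcd(s,t)=1$ and costing one edge otherwise. The lower-bound strategy --- an auxiliary hypergraph tracking the disintegration of $G'$ together with a potential that strictly increases at every peel --- is also the paper's method (the hyperforest $H_i$ with potential $s_i=f_i+2c_i$, where $f_i$ counts hyperedges and $c_i$ components).

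The gap is in how you propose to extract the last unit of the lower bound. Your item (ii) asks for a \emph{static} inequality $\Phi(G')\ge\Phi(\varnothing)-(2s+2t-2)$ forced by $K_{s,t}$-freeness. With the natural potential of the kind you describe, this is false: the extremal graph is connected, so its initial potential is the minimum possible ($f_0+2c_0=3$ against the terminal value $2(s+t+1)$), and telescoping the per-step increase of at least $1$ only yields $|E(G')|\le 2s+2t-1$. The missing $1$ cannot come from the initial configuration; it must come from the \emph{dynamics}. The paper proves that along any peeling at least one step increases the potential by at least $2$ (in its notation, $Q\ge 1$), via a structural classification of the intermediate hyperforests: as long as every step is ``cheap,'' the hyperforest must remain a hyperstar, or two hyperstars whose centers are joined by a size-two hyperedge, and the step that finally collapses this structure is shown to force an excess. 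Identifying and proving this excess-step statement is the technical heart of the lower bound, and your plan does not contain it; without it, or without exhibiting a genuinely different potential for which your static inequality (ii) actually holds, the argument only gives $\mathrm{wsat}(s+t+1,K_{s,t})\ge\binom{s+t+1}{2}-(2s+2t-1)$. A secondary (repairable) mismatch: your auxiliary hypergraph aggregates the component structure of $G'-w$ over all omitted vertices $w$, whereas the paper's hyperforest lives on $V(G')$ and is refined only by the specific closed vertices actually used in the process, which is what makes the monotonicity lemma tractable.
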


We also show that the method can be generalised to larger $n$. In particular, we applied its simplified version to get the last bound in the theorem that follows for $\mathrm{wsat}(s+t+j,K_{s,t})$, where $j=2$. Although this most straightforward implementation gives suboptimal bounds for larger $j$, we believe that an advancement of our approach that takes into account high connectivity properties of subgraphs of complements to weakly saturated graphs may give much tighter results --- see discussions in Section~\ref{sc:further}. 

Actually we show that known bounds on the maximum number of edges in a graph without $k$-connected subgraphs yield reasonable lower bounds on the weak saturation number, identifying the order of magnitude of the second order term:
$$
 \mathrm{wsat}(s+t+j,K_{s,t})={s+t+j\choose 2}-\Theta(j(s+t)).
$$
In particular, for $s=t$, we get that the difference between the bounds is $(7/6+o_j(1))js$. Below we state our result for arbitrary $j$. Note that our upper bound coincides, up to a constant additive term, with the bounds from~\cite{Kronenberg_2021} for those pairs $(t,s)$, where both results are valid, and we conjecture that this upper bound is essentially tight --- see Section~\ref{sc:further}.

%The next theorem give new lower and upper bounds on $\mathrm{wsat}(n,K_{s,t})$ when the difference $j:=n-(s+t)$ is bigger than 1. The gap between the bounds is $(1/2+o_n(1))nj^2$ [not clear what it means... maybe present in a different form...]. [Comparison with bound in Kronenberg et al] When $s=t$ and $j=t-3$ --- the only case when we may compare it with Kronenberg et al (?), we get the lower bound $(s-2)(2s-2)+2s-3$ and the upper bound $s((s-2)(s-1)+2)-1=\Theta(s^3)$. So, the upper bound only makes sense when $j=O(1)$. Can we get a better upper bound for large $j$? At least when $s=t$? Let $s=t$, $j=2$. Then the lower bound is $2(2s-2)+2s-3=6s-7$ and the upper bound is $8s-1$, so the difference is $n+4$. Can we make it $o(n)$? Or $O(1)$?

%\begin{theorem}\label{result_3}
%$|E(K_{2s+j})|-2s\left(\frac{j(j+1)}{2}+1\right)\leq \mathrm{wsat}(K_n, K_{2s+j})\leq |E(K_{2s+j})|-2s(j + 1)+2j+3.$
%\end{theorem}

\begin{theorem}\label{result_3}
For all $2 < s \leq t$ and $2 \leq j < t - 2$,
\begin{align}
 \label{eq:th3-1}
 \mathrm{wsat}(s+t+j, K_{s, t})\leq {s+t+j\choose 2}-j(s + t - 2) - 2t + 3,&\quad \text{ if }2 \leq j < t - 2;\\
 \label{eq:th3-2}
 \mathrm{wsat}(s+t+j, K_{s, t})\geq {s+t+j\choose 2}-\frac{19}{12}(j+1)(s+t-1),&\quad \text{ if }3 \leq j \leq \frac{2}{3}(s+t)-\frac{5}{3};\\
  \label{eq:th3-3}
  \mathrm{wsat}(s+t+2, K_{s, t})\geq {s+t+2\choose 2}-4(s + t)+ 1.
\end{align}
\end{theorem}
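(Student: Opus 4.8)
\textbf{Proof proposal for Theorem~\ref{result_3}.} The plan is to work throughout with the complement $G' = \overline G$ on $n = s+t+j$ vertices and to track $m = \abs{E(G')}$, since $\mathrm{wsat}(n,K_{s,t}) = \binom n2 - m$ as $G$ ranges over weakly $K_{s,t}$-saturated graphs. Weak $K_{s,t}$-saturation of $G$ is equivalent to two conditions on $G'$: (i) ($K_{s,t}$-freeness) for every $(s+t)$-element set $S\subseteq V$ and every partition $S=A\sqcup B$ with $\br{\abs A,\abs B}=\br{s,t}$ some edge of $G'$ joins $A$ to $B$; and (ii) (peelability) the edges of $G'$ admit an ordering $e_1,\dots,e_m$ such that, setting $H_i = G'\setminus\br{e_1,\dots,e_i}$, for each $i$ the edge $e_i = u_iv_i$ is the unique edge of $H_{i-1}$ joining the two sides of some partition $A_i\sqcup B_i$ of an $(s+t)$-set $S_i$, with $u_i\in A_i$, $v_i\in B_i$ and $\br{\abs{A_i},\abs{B_i}}=\br{s,t}$. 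I call $T_i = V\setminus S_i$ (of size exactly $j$) the \emph{window} of step $i$; since after step $i$ no edge of $H_i$ joins $A_i$ to $B_i$, every $u_i$--$v_i$ path in $H_i$ passes through $T_i$, so each valid deletion exhibits a separator of size $\le j$. Lower bounds on $\mathrm{wsat}$ will come from upper bounds on $m$ via a monovariant along $H_0=G',H_1,\dots,H_m=\overline{K_n}$, and the upper bound on $\mathrm{wsat}$ from an explicit choice of $G'$.

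For the upper bound \eqref{eq:th3-1}, I would exhibit a sparse, $(j+1)$-connected graph $G'$ on $n$ vertices with exactly $j(s+t-2)+2t-3$ edges, together with a valid peeling order. The $(j+1)$-connectivity immediately gives (i): $G'-T$ is connected for every $j$-set $T$, so no balanced biclique is left uncovered. A natural template is an asymmetric ``spine'': order the vertices $v_1,\dots,v_n$, put all ``long'' pairs $\br{v_a,v_{a+j+1}}$ into $G'$ together with a carefully chosen set of short pairs, and check that the first deletion can peel $\br{v_s,v_{s+j+1}}$ --- the unique edge joining $\br{v_1,\dots,v_s}$ to $\br{v_{s+j+1},\dots,v_n}$ with window $\br{v_{s+1},\dots,v_{s+j}}$ --- after which the remaining edges come off one at a time in a prescribed order. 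The only delicate point is to choose the short pairs so that $\abs{E(G')}$ is exactly $j(s+t-2)+2t-3$ while connectivity and peelability survive every step; this is also where the asymmetry between $s$ and $t$ in the formula originates. The verification is otherwise routine, and the result matches the corresponding upper bound from~\cite{Kronenberg_2021} up to an additive constant.

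For the lower bound \eqref{eq:th3-2}, the key structural claim is that $G'$ has no $(j+2)$-connected subgraph. Indeed, if $R\subseteq G'$ were $(j+2)$-connected, let $e_i$ be the first deleted edge lying in $R$; then all edges of $R$ are still present in $H_{i-1}$, so $e_i$ is the only edge of $R$ joining $A_i$ to $B_i$. Deleting from $R$ the at most $j$ vertices of $V(R)\cap T_i$ leaves a graph $R'$ with $\kappa(R')\ge (j+2)-j = 2$ and $\abs{V(R')}\ge \abs{V(R)}-j\ge 3$, so $R'$ is $2$-connected and bridgeless; but $e_i$ is a bridge of $R'$, a contradiction. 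Feeding $G'$ into the known upper bound on the number of edges of an $n$-vertex graph with no $k$-connected subgraph, applied with $k=j+2$, then yields (in the stated range of $j$) $m\le\tfrac{19}{12}(j+1)(s+t-1)$, which is \eqref{eq:th3-2}; comparing with \eqref{eq:th3-1} gives the $\Theta(j(s+t))$ order of the correction term and the $\tfrac{7}{6}js$ gap when $s=t$.

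The sharper bound \eqref{eq:th3-3}, where the window is a pair $T_i=\br{x_i,y_i}$, needs a more refined monovariant, and this is where the main work --- and the main obstacle --- lies. A valid deletion now falls into one of three regimes in $H_i$: (a) $u_i,v_i$ lie in different components; (b) they lie in one component but are separated by a single one of $x_i,y_i$; (c) they are separated by $\br{x_i,y_i}$ but by neither vertex alone. In regime (a) the number of components of $H$ increases, and in regime (b) the number of $2$-connected blocks increases; but in regime (c) both of these are unchanged, and coarser invariants do not help --- the number of $3$-connected pieces in the Tutte/SPQR decomposition can actually decrease (deleting the shared edge of two triangles gives a $4$-cycle, which is less refined), and the raw count of $2$-element separators is neither monotone under edge deletion nor bounded by $O(n)$. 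The plan is to attach to each $H$ an auxiliary hypergraph $\mathcal H(H)$ whose hyperedges encode the coarse connectivity architecture of $H$ (its components, its blocks, and the tree-like way these and their small separators are assembled), to let $\Phi(H)$ be a fixed positive linear combination of a few parameters of $\mathcal H(H)$ --- the number of components of $H$, the number of its blocks, and a bespoke ``separation parameter'' engineered to be monotone under edge deletion, bounded by $O(s+t)$, and strictly increasing in regime (c) --- and to fix the coefficients by the three inequalities forced by regimes (a)--(c), so that $\Phi$ strictly increases at every valid deletion. Then $m\le\Phi(\overline{K_n})-\Phi(G')$; the first term evaluates to $4(s+t)+O(1)$ from the coefficients, while the second is bounded below using (i) --- since $G'$ covers every balanced biclique it cannot be too disconnected and its blocks cannot all be too simple --- which should give exactly $m\le 4(s+t)-1$. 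I expect two difficulties: designing the separation parameter so that it simultaneously meets all three requirements above (the point where every naive candidate fails), and pinning down the additive constant, for which the lower bound on $\Phi(G')$ from biclique-covering must be tight rather than merely of the right order.
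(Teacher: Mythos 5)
Your treatment of \eqref{eq:th3-2} is essentially the paper's argument: the complement of a weakly saturated graph has no $(j+2)$-connected subgraph (via the first edge of that subgraph to be deleted), and then Bernshteyn--Kostochka finishes it; that part is sound. The other two parts have genuine gaps. For \eqref{eq:th3-1} you never actually produce a construction: the ``spine'' template is left unspecified precisely at the point where one must verify both the edge count $j(s+t-2)+2t-3$ and the peelability of \emph{every} edge, and those verifications are the entire content of the upper bound. The paper's example is a concrete three-block graph on $A\cup B\cup C$ with $|A|=s$, $|B|=t$, $|C|=j$: a clique on $A$, a clique on $\{b_{j+3},\dots,b_t\}$, complete bipartite graphs between $A,B$ and between $B\setminus\{b_1\},C$ minus the single edge $\{a_1,b_1\}$, plus the edge $\{b_1,b_{j+3}\}$, together with an explicit order of edge additions; the asymmetric $2t$ term in the bound comes from the specific way $C$ is attached to $B$, and nothing of this shape is forced by a generic spine.

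The more serious gap is \eqref{eq:th3-3}. You correctly diagnose that regime (c) defeats component counts, block counts, and Tutte-decomposition counts, but your proposed fix --- a ``bespoke separation parameter'' that is monotone, $O(n)$-bounded, and strictly increasing in regime (c) --- is exactly the object you concede you cannot construct, so the proof is missing its central ingredient. The paper does not use any per-step potential for $j=2$. It instead defines $f_j(k)$ as the maximum number of edges of a $j$-erasable graph whose non-isolated vertex set $U$ has size $k$, and proves the recursion $f_j(k)\le 1+\max\{f_j(k_1)+f_j(k_2): k_1+k_2=k+j,\ j+1\le k_1,k_2\le k-1\}$: the first erase step, witnessed by $(V^1,V^2)$ with window $U^0$ of size $j$, splits $U$ into the two overlapping pieces $U^i=U^0\cup(V^i\cap U)$, each of which (padded with isolated vertices) is again $j$-erasable via the same witnesses, and every remaining edge lies entirely inside one of them. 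With the base case $f_j(k)=\binom{k}{2}$ for $k\le j+2$, a one-line induction gives $f_j(k)\le \alpha k-\beta$ with $\alpha=\binom{j+1}{2}+1$ and $\beta=\alpha j+1$; for $j=2$ this is $f_2(s+t+2)\le 4(s+t)-1$, which is \eqref{eq:th3-3}. So the right route here is a divide-and-conquer recursion over the erase sequence, not a monovariant.
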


%[Same bounds on $j$ for the lower and for the upper bound?]

As we have already mentioned,~\eqref{eq:th3-2} follows from the fact that the number of edges $\varphi_n(k)$ in an $n$-vertex graph that does not have $(k+1)$-connected subgraphs is bounded by $O(kn)$. The classical result of Mader~\cite{Mader} asserts that $\varphi_n(k)\leq(1+1/\sqrt{2}-o(1))kn$, for every $k\geq 2$. Mader conjectured that the right bound is $3/2(k-1/3)(n-k)$ and proved that it cannot be less --- there are arbitrarily large graphs with exactly $3/2(k-1/3)(n-k)$ edges that do not have $(k+1)$-connected subgraphs. Yuster~\cite{Yuster} improved the constant in front of $kn$ to $193/120$, and then Bernshteyn and Kostochka~\cite{BK} got the current record $19/12$. Mader also verified the conjecture for all $k\in[2,6]$ and large enough $n$. We note that it implies the following bound for $j=2$ and {\it large enough} $s+t$:
$$
  \mathrm{wsat}(s+t+2, K_{s, t})\geq {s+t+2\choose 2}-4(s + t)+4.
$$
Although this bound is slightly better than~\eqref{eq:th3-3}, we keep the weaker version since it holds {\it for all $s$ and $t$}. We also believe that the observation that a complement to a weakly $K_{s,t}$-saturated graph on $s+t+j$ does not contain $(j+2)$-connected subgraphs is not enough to estimate the weak saturation number precisely --- see the discussion in Section~\ref{sc:further}.

\begin{remark}
For $j\in\{3,4,5\}$ and large enough $s+t$, a better lower bound follows from Mader's result: 
$$
 \mathrm{wsat}(s+t+j, K_{s, t})\geq {s+t+j\choose 2}-\frac{3j+2}{2}(s+t-1).
$$
\end{remark}

\paragraph{The method: hyperforests and semi-invariants.} Let $G$ be a weakly $K_{s,t}$-saturated graph on $s+t+j$ vertices and let $G'$ be its edge-complement. Proofs of lower bounds in Theorems~\ref{result_1}--\ref{result_3} are based on the observation that $G'$ does not contain $(j+2)$-connected subgraphs (see Claim~\ref{cl:connectivity_reduction}) and each step in a sequential deletion of edges in $G'$, that corresponds to the addition of edges to $G$, breaks some $k$-connected component for $k\leq j+1$. 

For $j=1$, the maximum number of edges in an $n$-vertex graph without 3-connected subgraphs equals $\frac{5}{2}(n-2)$, due to Mader~\cite{Mader}. This gives the bound $\mathrm{wsat}(s+t+1,K_{s,t})\geq{s+t+1\choose 2}-\frac{5}{2}(s+t-1)$ which is $\left(\frac{1}{2}(s+t)+O(1)\right)$-far from the answer. Therefore, it is essential to take into account the speed of destruction of connected components. We model the process of destruction by a sequence of hyperforests on $V(G')$, whose hyperedges do not intersect non-trivially with 2-connected components. Initially each hyperedge coincides with a 1-connected component of $G'$, and at the end of the process all vertices become isolated. Denoting by $f_i$ and $c_i$ the number of hyperedges and components of the $i$-th hyperforest, we prove that $f_i+2c_i\geq f_{i-1}+2c_{i-1}+1$, which gives the required control of the speed. This approach leads to a lower bound on $\mathrm{wsat}(s+t+1,K_{s,t})$ which is at most 3-far from the answer. Some technical case analysis is required to show that there are always few expensive steps, where the difference $f_i+2c_i-(f_{i-1}+2c_{i-1}+1)$ is positive.

Getting tight bounds for larger $j$ is challenging since it requires introducing more complex combinatorial structures and controlling more parameters. In order to put the first brick in the wall and to demonstrate the efficiency of our approach, in the current paper we restrict ourselves with two simple applications of our ideas. First, we show that in the case $j=2$, counting the number of hyperedges in the auxiliary hypergraph is enough to get a lower bound which is as good as the application of Mader's result (and allows to get the bound for small $s+t$, where the result of Mader is not applicable). Second, for $j\geq 3$, we derive our bounds from the improvement of Mader's result due to Bernshteyn and Kostochka~\cite{BK}. We believe that a combination of these two arguments in the spirit of our proof in the case $j=1$ could at least lead to a tight estimation of the second-order term.

\paragraph{Organisation.} In Section~\ref{sc:pre} we define erase processes and auxiliary hyperforests and prove their properties that we use for the lower bounds in Theorems~\ref{result_1}~and~\ref{result_2}. These theorems are proved in Sections~\ref{sc:1_proof}~and~\ref{sc:2_proof}, respectively. The proof of Theorem~\ref{result_3} is presented in Section~\ref{sc:3_proof} and requires a modified erase procedure, that we define at the beginning of Section~\ref{sc:2_lower_proof}. Finally, in Section~\ref{sc:further} we discuss some open questions.

%\subsection{Organization}

%The next part of our Introduction introduces our notation and proves the upper bound of Theorem \ref{result_1}. The main text of the article begins with a section on the core arguments for Theorems \ref{result_1} and \ref{result_2}, focusing on the concepts of hypertrees and semi-invariants. This section provides a theoretical foundation, including definitions and properties of these key concepts. The following section presents proofs of Theorems \ref{result_1} and \ref{result_2}, with particular emphasis on the upper bound of Theorem \ref{result_2}, illustrated through graph construction. The lower bound of Theorem \ref{result_2} is also addressed. We then expand the arguments to the case where \( n = s + t + j \), discussing the lower and upper bounds of Theorem \ref{result_3} in this context. The article concludes with a section on open problems.

\section{Erase process and auxuliary hyperforests}
\label{sc:pre}

To obtain bounds for $\mathrm{wsat}(s+t+1, K_{s,t})$, we introduce an equivalent {\it erase process} in the complement graph: at each step, a certain edge is erased under constraints corresponding to the creation of a new $K_{s,t}$. This viewpoint allows us to study maximal erasable graphs instead of weakly saturated ones, and to describe their structure through auxiliary operations and associated hyperforests, which will be central in our arguments.

In Section~\ref{sc:erase} we define the erase process and show that the maximum number of edges in an erasable graph and the weak saturation number sum up to the number of edges in a clique ${s+t+1\choose 2}$. We will prove both upper and lower bounds in Theorems~\ref{result_1}~and~\ref{result_2} in terms of erasable graphs. The main novel ingredient --- the auxiliary hyperforests and an associated semi-invariant --- will be introduced in Sections~\ref{sc:hyperforests}~and~\ref{sc:invariants}. 

\subsection{Erase process}
\label{sc:erase}

Let \( G = (V, E) \) be a graph with \( s + t + 1 \) vertices and  \( (V_1, V_2, v) \) be a partition of \( V \) such that:
\begin{enumerate}
    \item  \( |V_1| = s \) and \( |V_2| = t \),
    \item there is exactly one edge \( e \) between \( V_1 \) and \( V_2 \).
\end{enumerate}
Define \( \text{Erase}(e, G) \) as the procedure that removes edge \( e \) from \( G \). An edge \( e \in E(G) \) is called \textit{erasable} in \( G \) if there exists a partition \( (V_1, V_2, v) \) of \( V \) that permits the application of \( \text{Erase}(e, G) \), and we call the witness vertex $v$ {\it closed}. A graph \( G \) with \( |E(G)| = m \) is called \textit{erasable} if there exists an ordering \( e_m, \ldots, e_1 \) of \( E(G) \) such that for each \( i \in [m] \), the edge \( e_i \) is erasable in \( G \setminus \{e_m, \ldots, e_{i+1}\} \). The sequence of graphs $G_0=G$, $G_1=G_0\setminus e_1$, $G_2=G_1\setminus e_2,$ $\ldots,$ $G_m=\varnothing$ is called an {\it erase process}.

\begin{claim}\label{claim}
Let \( G \) be an erasable graph with the maximum number of edges among all erasable graphs on \( s + t + 1 \) vertices. Then,
\[
|E(G)| = \binom{s + t + 1}{2} - \mathrm{wsat}(s + t + 1, K_{s, t}).
\]
\end{claim}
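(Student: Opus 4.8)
The plan is to set up a correspondence, via edge-complementation inside $K_n$ with $n := s+t+1$, between weakly $K_{s,t}$-saturated graphs on $n$ vertices and erasable graphs on $n$ vertices. Write $\overline{H} := K_n \setminus E(H)$ for the complement. The crux is the following local equivalence, which is special to $n = s+t+1$: for a graph $H$ on $V$ with $|V| = n$ and an edge $e \notin E(H)$, the graph $H \cup \{e\}$ contains a copy of $K_{s,t}$ using $e$ if and only if $e$ is erasable in $\overline{H}$. I would prove this by a one-spare-vertex argument. For the forward direction, a copy of $K_{s,t}$ in $H \cup \{e\}$ using $e$ has parts $A$, $B$ with $|A| = s$, $|B| = t$ and $e$ between them; all $st$ edges between $A$ and $B$ lie in $H \cup \{e\}$, hence all but $e$ lie in $H$, so $e$ is the unique edge of $\overline{H}$ between $A$ and $B$. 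Since $|A \cup B| = n-1$ there is exactly one vertex $v \notin A \cup B$, and $(A, B, v)$ is a partition witnessing that $e$ is erasable in $\overline H$. The converse is the same computation read backwards: if $(V_1, V_2, v)$ witnesses erasability of $e$ in $\overline H$, then $\overline H$ has exactly one edge between $V_1$ and $V_2$, so $H$ misses only $e$ among the $st$ edges between $V_1$ and $V_2$, and $H \cup \{e\}$ contains the complete bipartite graph on $(V_1, V_2)$.

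Granting the local equivalence, the inequality $|E(G)| \geq \binom{n}{2} - \mathrm{wsat}(n, K_{s,t})$ follows quickly. Take a weakly $K_{s,t}$-saturated $H$ with $|E(H)| = \mathrm{wsat}(n,K_{s,t})$, together with a saturating order $e_1, \dots, e_m$ of $E(K_n)\setminus E(H)$. For each $i$, adding $e_i$ to $H\cup\{e_1,\dots,e_{i-1}\}$ creates a copy of $K_{s,t}$ through $e_i$; noting that the complement of $H\cup\{e_1,\dots,e_{i-1}\}$ is $\overline H\setminus\{e_1,\dots,e_{i-1}\}$, the local equivalence says $e_i$ is erasable there. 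Hence deleting the edges of $\overline H$ in the order $e_1,\dots,e_m$ is a valid erase process terminating at the empty graph, so $\overline H$ is erasable with $\binom n2 - \mathrm{wsat}(n,K_{s,t})$ edges.

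For the matching upper bound, let $G$ be an erasable graph on $n$ vertices with the maximum number of edges. Reading the local equivalence in reverse along an erase process of $G$, one obtains an ordering of $E(\overline G)$ so that each added edge completes a new $K_{s,t}$, and the process ends at $K_n$. To conclude that $\overline G$ is weakly $K_{s,t}$-saturated --- and therefore $\mathrm{wsat}(n,K_{s,t}) \leq |E(\overline G)| = \binom n 2 - |E(G)|$ --- it remains to verify that $\overline G$ is $K_{s,t}$-free. Here maximality of $G$ is used: if $\overline G$ contained a copy of $K_{s,t}$ with parts $A, B$, then $G$ would have no edge between $A$ and $B$, so for any $a \in A$, $b \in B$ the edge $f := ab$ is a non-edge of $G$; with $v$ the unique vertex outside $A \cup B$, the triple $(A, B, v)$ witnesses that $f$ is erasable in $G \cup \{f\}$, and since $(G\cup\{f\})\setminus\{f\} = G$ is erasable, $G\cup\{f\}$ would be an erasable graph with more edges than $G$, a contradiction. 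Combining the two inequalities yields the claim.

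The main obstacle I anticipate is precisely the $K_{s,t}$-freeness step: one must check carefully that the single spare vertex is exactly what makes the partition $(A,B,v)$ legal, so that adding one cross edge to the complement genuinely produces a larger erasable graph. The rest is routine bookkeeping with complements and the identity $n - 1 = s + t$.
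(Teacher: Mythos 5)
Your proposal is correct and follows essentially the same route as the paper: complementing inside $K_{s+t+1}$ and coupling each addition of an edge (with its new copy of $K_{s,t}$) to an application of the erase procedure, using the single spare vertex to witness the partition. The only difference is one of detail — you explicitly verify the $K_{s,t}$-freeness of $\overline{G}$ via the maximality of $G$, a point the paper's one-line proof leaves implicit.
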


\begin{proof}
Let $G$ be a graph on $s+t+1$ vertices and $\overline{G}$ be its edge-complement. It is sufficient to show that $G$ is erasable if and only if $\overline{G}$ is weakly $K_{s,t}$-saturated. To see this, we couple the process of weak saturation with the erase procedure: each time we add $e_i$ to $\overline{G}_i$ together with a copy of $K_{s,t}$ and get $\overline{G}_{i+1}$, we observe that the parts $V_1,V_2$ of this copy and the remaining vertex $v$ witness the fact that $e_i$ is erasable in $G_i$ (and vice versa).
\end{proof}

%It will be more convenient for us to estimate the maximum number of edges in an erasable graph instead of analysing the weak saturation number directly.\\

Let us now fix a graph \( G \) with the maximum number of edges among all erasable graphs on $s+t+1$ vertices, and let \( m = |E(G)| \). Fix an erase process \( \mathcal{P} \). Enumerate the edges in \( G \) according to the order in which they are erased in the process \( \mathcal{P} \): $e_1$ is erased first and $e_m$ is erased last. Let \( G_0 = G, G_1, G_2, \ldots, G_m=\varnothing \) be the respective sequence of graphs in accordance with \( \mathcal{P} \). For each erased $e_i$, we fix its witness partition $(V_i^1,V_i^2,\{v_i\})$.

\subsection{Construction of the hyperforest and its properties}
\label{sc:hyperforests}

To each graph \( G_i = G \setminus \{e_1, \ldots, e_{i-1}\},~ i \in [m] \), we assign a hyperforest\footnote{A \textit{hyperforest}  is a hypergraph \( H = (V, E) \) that contains no cycles. A cycle in a hypergraph is a sequence of hyperedges \( e_1, e_2, \dots, e_k \in E \) and vertices \( v_1, v_2, \dots, v_k \in V \), where each vertex belongs to two consecutive hyperedges, forming a closed loop: $v_1\in e_1\cap e_2$, $v_2\in e_2\cap e_3$, $\ldots,$ $v_k\in e_k\cap e_1$. In particular, substituting $k=2$, we get that a hyperforest do not contain a pair of hyperedges that share at least 2 vertices.} \( H_i \) (its construction is presented in what follows), built on the same set of vertices and satisfying the following properties:
\begin{enumerate}\label{three properties}
\item for each edge \( e \in E(G_i) \), there exists a hyperedge \( F \in H_i \), such that \( e \subset F\),
\item for any hyperedge \( F \in E(H_i) \), the induced subgraph \( G_i[F] \) is connected.
\end{enumerate}

We start from the initial graph $G_0$ and assign to it a hypergraph $H_0$ in the following way: each hyperedge of $H_0$ consists of vertices of a connected component of $G_0$. We then inductively construct \( H_i, i \in [m] \), via the following operations. % that produce from a hyperedge $F$ of $H_{i-1}$ several hyperedges of $H_i$.

\begin{itemize}

\item \textit{Edge-operation} is applied to a hyperedge $F$ of $H_{i-1}$ such that $e_i\subset F$. Assuming that $H_{i-1}$ is a hyperforest satisfying the property 1, there is an exactly one such hyperdge $F$. We then replace $F$ with the set of hyperedges that form connected components of $G_{i-1}[F] \setminus e$ of size more than 1. In particular, if $G_{i-1}[F] \setminus e$ is connected, then $F$ remains unchanged. Note that the edge-operation replaces $F$ with zero, one, or two hyperedges. Denote by $O(e)$ the set of new hyperedges.

\item \textit{Vertex-operation} is applied to all hyperedges $F$ that contain the closed vertex $v_i$. Let \( F_1, F_2, \ldots, F_h \) be the connected components of \( G_i[F\setminus \{v_i\}] \). We replace \( F \) with hyperedges \( F_1 \cup \{v_i\}, F_2 \cup \{v_i\}, \ldots, F_h \cup \{v_i\} \). Denote by $O(F,v_i)$ the set of new hyperedges that this operation produces from $F$.

\end{itemize}

For every $i\in[m]$, the hypergraph $H_i$ is constructed in the following way: (1) find the unique $F\in E(H_{i-1})$ containing $e_i$ and replace $F$ with $O(e)$, denote the new intermediate hypergraph by $H$; (2) find the set $\mathcal{F}_i$ of all $F\in E(H)$ containing $v_i$ and, for every $F\in\mathcal{F}_i$, replace $F$ with $O(F,v_i)$.

\begin{claim}
For every $i\in\{0,1,\ldots,m\}$, $H_i$ is a hyperforest satisfying properties 1 and 2. % such that every edge of $G_i$ belongs to some hyperedge of $H_i$.
\end{claim}

\begin{proof}
We prove the claim by induction over $i$. The case $i=0$ is straightforward. 

Assume $H_{i-1}$ is a hyperforest covering all edges of $G_{i-1}$. Let $F\in E(H_{i-1})$ be the only hyperedge that contains $e_i$. The edge-operation partitions $F$ into subsets and thus it does not produce cycles. Assume some edge $e\in E(G_i)$ does not belong to a hyperedge of the new hypergraph. Then it belongs to $F$ and, therefore, it belongs to some connected component of $F[G_i\setminus e_i]$. Therefore, it must belong to a hyperedge of the new hypergraph --- a contradiction. The fact that all hyperedges of the new hyperforest induce connected graphs is straightforward.

It remains to prove that each vertex-operation preserves the property of being a hyperforest and preserves properties 1 and 2 --- keeps all edges of $G_i$ covered and all hyperedges connected. We will prove it, again, by induction, assuming that all the previous vertex-operations preserve the required properties. Since the current hypergraph $H$ (before applying the vertex operation to $F$) is a hyperforest, each edge from $O(F,v_i)$ has at most one common vertex with every hyperedge from $E(H)\setminus\{F\}$. Any two sets from $O(F,v_i)$ have one common vertex by definition. Therefore, the new hypergraph is indeed a hyperforest. Next, every edge $e\in E(G_i)$ is either inside a hyperedge from $E(H)\setminus \{F\}$, or it belongs to $F$. In the latter case, it cannot join two different sets $F_j,F_{j'}$. Therefore, it belongs to one of the sets $F_1\cup\{v_i\},\ldots,F_h\cup\{v_i\}$, as needed. It remains to show that all $G_i[F_j\cup\{v_i\}]$, $j\in[h]$, are connected. Graphs $G_i[F_j]$ are connected by the definition of a vertex-operation. If $v_i$ does not have $G_i$-neighbours in $F_j$, then $F_j$ is a connected component of $G_i[F]$ --- a contradiction with the induction assumption that $G_i[F]$ is connected.
\end{proof}

\subsection{Semi-invariant}
\label{sc:invariants}

Let \( f_i \) and \( c_i \) denote the number of hyperedges and connected components, respectively, in the auxiliary hyperforest \( H_i \), where $i\in[m]$. We note that each isolated vertex in this hyperforests counts as a connected component. 

We explore the dynamic of the vector \( (f_i, c_i) \) as \( i \) grows from \( 1 \) to \( m \). Our aim is to show that 
$$
 s_i:=f_i+2c_i
$$
strictly increases in $i$. Indeed, if, for every $i$, $s_i-s_{i-1}\geq 1$, then 
\begin{equation}
 2(s+t+1)-3\geq f_m+2c_m-(f_0+2c_0)=s_m-s_0=\sum_{i=1}^m(s_i-s_{i-1})\geq m,
 \label{eq:Q}
\end{equation}
giving a lower bound, which is 3-far from the lower bound in Theorem~\ref{result_1} and 1-far from the lower bound in Theorem~\ref{result_2}. Therefore, we will only need to show, that few steps $i$ are suboptimal with $s_i-s_{i-1}\geq 2$.

We first describe all possible changes of \((f_i,c_i)\) in one step.

\begin{lemma}\label{events}
For every \( i \in [m] \) there exists a nonnegative integer \( \lambda=\lambda(i) \) and a vector
\[
\overrightarrow{v} \in \{(1,0),\ (1,1),\ (0,1),\ (-1,1)\}
\]
such that
\[
(f_i, c_i) = (f_{i-1}, c_{i-1}) + \overrightarrow{v} + (\lambda,0).
\]
\end{lemma}

\begin{proof} Every step $i\in[m]$ consist of sequential applications of a single edge-operation and several vertex-operations. Assume that, after the edge-operation applied to a hyperedge $F$, the connected graph $G_{i-1}[F]$ splits into two connected components on sets of vertices $V_1,V_2$, not necessarily non-trivial. If one of these sets is empty, then $(f_{i-1},c_{i-1})$ remains unchanged. Otherwise, consider three cases: \( |V_1| > 1 \) and \( |V_2| > 1 \); \( |V_1| = 1 \) and \( |V_2| > 1 \); and \( |V_1| = 1 \) and \( |V_2| = 1 \).
\begin{enumerate}
\item if \( |V_1|>1 \) and \( |V_2|>1 \), then a new component emerges, giving the increment \((1,1)\);
\item if exactly one of \( |V_1|,|V_2| \) equals \(1\), the number of hyperedges remains unchanged while the number of components increases by one, giving the increment \((0,1)\);
\item if \( |V_1|=|V_2|=1 \), we lose one hyperedge and get a new connected component, giving the increment \((-1,1)\).
\end{enumerate}
Any subsequent vertex-operation does not change the number of connected components and does not decrease the number of hyperedges. Therefore, it only remains to prove that, if either $V_1=\varnothing$ or $V_2=\varnothing$, then the number of edges gets bigger. Indeed, if the edge-operation is applied to $F$ and $F$ does not split, then $v_i\in F$. Therefore, a vertex-operation is also applied to $F$ and then $F$ has to split in at least two hyperedges, as needed.
\end{proof}

We then immediately get the desired corollary.

\begin{corollary}
\label{S(H)}
For every \( i \in [m] \), $s_i\geq s_{i-1}+1$.
\end{corollary}
Let 
\[
Q_i := s_i - s_{i-1} - 1, \qquad Q := \sum_{i=1}^m Q_i.
\]
Here \(Q_i\) measures the excess growth of \(s_i\) beyond the minimal possible increase of \(1\) per step. To prove the lower bounds in Theorems~\ref{result_1}~and~\ref{result_2}, it suffices to show \(Q \ge 3\) and \(Q \ge 1\), respectively. Note that some values of the increment $\overrightarrow{v}$ from Lemma~\ref{events} increase \(s_i-s_{i-1}\) by more than \(1\): for instance, the increments \((0,1)\) and \((1,1)\) make \(Q_i\) equal to \(1\) and \(2\), respectively. The rest of the proof of the lower bounds is devoted to showing that if initially $f_0+2c_0\leq 5$ when $s=t$ or $f_0+2c_0\leq 3$ when $s\neq t$, then there is always a couple of such suboptimal steps that result in the desired lower bounds on the total excess $Q$.

% Note that if $(e_i,c_i) = (e_{i-1},c_{i-1}) + b(1,0) + a(0,1)$ for some $a,b \in \mathbb{Z}$, then $Q_i = 2a + b - 1$.

%\begin{note}\label{incr}
%Let \(B = A + (0,1)\) and \(C = A + (1,1)\). Then
%\[
%Q(B) \leq Q(A) + 1, 
%\qquad 
%Q(C) \leq Q(A) + 2.
%\]
%\end{note}

%Informally, Note~\ref{incr} states that such steps are \emph{non-optimal}, in the sense that the semi-invariant \(s_i\) increases by more than one.

\section{Proof of Theorem~\ref{result_1}}
\label{sc:1_proof}

\subsection{Upper bound}

To prove the upper bound, by Claim~\ref{claim} it suffices to construct an erasable graph with \(4s-4\) edges. Such a graph can be obtained from a disjoint \((s-1)\)-path \(P_1\), an \((s-2)\)-path \(P_2\), and a \(C_4\) cycle \((a,b,c,d)\) by joining \(a\) to all vertices of \(P_1\) and \(b\) to all vertices of \(P_2\). Figure~\ref{Int} illustrates an example of this graph (top left) together with the steps of the erasing procedure.

\begin{figure}[h!]
\centering
\tikzset{
  v/.style    = {circle, draw, fill=blue!60, inner sep=1.6pt},
  e/.style    = {line width=0.8pt},
  gone/.style = {line width=0.8pt, dashed},
  dead/.style = {circle, draw, fill=red!75, inner sep=1.8pt}
}

% ---------- параметры ----------
\def\GraphScale{0.99}
\def\XStep{4.0cm}   % горизонтальный шаг
\def\RowGap{7.0cm}  % вертикальный зазор
\def\s{5}           % s=5 => a0..a3, b0..b2

% ===== контроль рисования рёбер =====
\def\SkipList{} % сюда копим удалённые рёбра
\newcommand{\Edge}[1]{%
  \IfSubStr{\SkipList}{#1}{}{ \draw[e] #1; }%
}
\newcommand{\DashList}[1]{%
  \begingroup\edef\tmp{#1}\ifx\tmp\empty\else
    \foreach \E in {#1}{\draw[gone]\E;}
  \fi\endgroup
}

% ---------- исходный граф ----------
\newcommand{\DrawGraph}{%
  % удобные границы для последнего индекса
  \pgfmathtruncatemacro{\Amax}{\s-2} % последний индекс для a: s-2
  \pgfmathtruncatemacro{\Bmax}{\s-3} % последний индекс для b: s-3

  % путь P1: a0..a_{s-2} (в координатах используем i+1, чтобы сохранить прежние Y)
  \foreach \i [evaluate=\i as \yy using 2.6-0.6*(\i+1)] in {0,...,\Amax}{
    \node (A\i) at (0,\yy) [v] {};
    \node[left=2pt] at (A\i) {$a_{\ifnum\i=\Amax s-2\else\i\fi}$};
  }
  \foreach \i in {0,...,\numexpr\Amax-1\relax}{
    \pgfmathtruncatemacro{\ip}{\i+1}
    \Edge{(A\i)--(A\ip)}
  }

  % путь P2: b0..b_{s-3}
  \foreach \j [evaluate=\j as \yy using -1.2-0.6*(\j+1)] in {0,...,\Bmax}{
    \node (B\j) at (0,\yy) [v] {};
    \node[left=2pt] at (B\j) {$b_{\ifnum\j=\Bmax s-3\else\j\fi}$};
  }
  \foreach \j in {0,...,\numexpr\Bmax-1\relax}{
    \pgfmathtruncatemacro{\jp}{\j+1}
    \Edge{(B\j)--(B\jp)}
  }

  % C4-трапеция (основания ab и cd вертикальны и параллельны)
  \node (a) at (1.1,  1.00) [v] {};
  \node (b) at (1.1, -2.00) [v] {};
  \node (d) at (1.7,  0.00) [v] {};
  \node (c) at (1.7, -1.00) [v] {};
  \node[above right=1pt]  at (a) {$a$};
  \node[below right=1pt]  at (b) {$b$};
  \node[below right=1pt]  at (c) {$c$};
  \node[above right=1pt]  at (d) {$d$};

  % рёбра C4
  \Edge{(a)--(b)} \Edge{(c)--(d)} \Edge{(b)--(c)} \Edge{(a)--(d)}

  % соединения
  \Edge{(a)--(A0)} \Edge{(a)--(A1)} \Edge{(a)--(A2)} \Edge{(a)--(A3)}
  \Edge{(b)--(B0)} \Edge{(b)--(B1)} \Edge{(b)--(B2)}
}

\def\LabelX{0.0}\def\LabelY{-3.8}
\newcommand{\Under}[3]{%
  \begin{scope}[xshift=#1, yshift=#2]
    \node at (\LabelX,\LabelY) { #3};
  \end{scope}
}

% один кадр: (x, y, красная вершина, текущие удаляемые рёбра, ранее удалённые)
\newcommand{\Frame}[5]{%
  \begin{scope}[xshift=#1, yshift=#2]
    \begingroup
      \edef\SkipList{#5, #4}%
      \DrawGraph
      \DashList{#4}%
      \if\relax\detokenize{#3}\relax\else \node[dead] at (#3) {}; \fi
    \endgroup
  \end{scope}
}

% ===== шаги (с 0-нумерацией) =====
\def\DOne{(a)--(b)}                         \def\AccOne{\DOne}
\def\DTwo{(c)--(d)}                         \def\AccTwo{\AccOne, (c)--(d)}
\def\DThree{(b)--(c)}                       \def\AccThree{\AccTwo, (b)--(c)}
\def\DFour{(a)--(d)}                        \def\AccFour{\AccThree, (a)--(d)}
\def\DFive{(a)--(A0)}                       \def\AccFive{\AccFour, (a)--(A0)}
\def\DSix{(A0)--(A1)}                       \def\AccSix{\AccFive, (A0)--(A1)}
\def\DSeven{(a)--(A1),(a)--(A2),(a)--(A3),
            (A1)--(A2),(A2)--(A3)}
\edef\AccSeven{\AccSix, \DSeven}
\def\DEight{(b)--(B0),(b)--(B1),(b)--(B2),
            (B0)--(B1),(B1)--(B2)}
\edef\AccEight{\AccSeven,\DEight}

% ===== размещение кадров =====
\begin{tikzpicture}[scale=0.8, transform shape]
  \Frame{0*\XStep}{0cm}{}{}{}              \Under{0*\XStep}{0cm}{Original graph}
  \Frame{1*\XStep}{0cm}{d}{\DOne}{}        \Under{1*\XStep}{0cm}{Step 1}
  \Frame{2*\XStep}{0cm}{a}{\DTwo}{\AccOne} \Under{2*\XStep}{0cm}{Step 2}
  \Frame{3*\XStep}{0cm}{a}{\DThree}{\AccTwo}\Under{3*\XStep}{0cm}{Step 3}
  \Frame{4*\XStep}{0cm}{A0}{\DFour}{\AccThree}\Under{4*\XStep}{0cm}{Step 4}

  \Frame{1*\XStep}{-\RowGap}{A1}{\DFive}{\AccFour} \Under{1*\XStep}{-\RowGap}{Step 5}
  \Frame{2*\XStep}{-\RowGap}{a}{\DSix}{\AccFive}   \Under{2*\XStep}{-\RowGap}{Step 6}
  \Frame{3*\XStep}{-\RowGap}{}{}{\AccSeven}        \Under{3*\XStep}{-\RowGap}{Step 7}
  \Frame{4*\XStep}{-\RowGap}{}{}{\AccEight}        \Under{4*\XStep}{-\RowGap}{Step 8}
\end{tikzpicture}

\caption{Erasing process for the graph obtained from a disjoint $(s-1)$-path $P_1$, an $(s-2)$-path $P_2$, and a 4-cycle $(a,b,c,d)$, by joining $a$ to all vertices of $P_1$ and $b$ to all vertices of $P_2$. At each step, exactly one edge (dashed) is erased and exactly one vertex (red) is closed.}
\label{Int}
\end{figure}

At each step, we colour the closed vertex in red. 
In Step~1, we close the vertex \(d\) and erase the edge $\{a,b\}$ --- the only edge between 
\(V^1_1 := \{a, a_0, \ldots, a_{s-2}\}\) and \(V^2_1 = \{c, b, b_0, \ldots, b_{s-3}\}\). 
Similarly, in Step~2, we close \(a\) and erase $\{c,d\}$, joining 
\(V^1_2 := \{d, a_0, \ldots, a_{s-2}\}\) with \(V^2_2 := \{c, b, b_0, \ldots, b_{s-3}\}\). 
In Step~3, we close \(a\) again and erase the edge $\{b,c\}$ between 
\(V^1_3 := \{c, a_0, \ldots, a_{s-2}\}\) and \(V^2_3 := \{d, b, b_0, \ldots, b_{s-3}\}\). 
In Step~4, we erase $\{a,d\}$ by closing, say, \(a_0\), where $\{a,d\}$ is the sole edge between 
\(V^1_4 := \{c, a, a_1, \ldots, a_{s-2}\}\) and \(V^2_4 := \{d, b, b_0, \ldots, b_{s-3}\}\). 
In Step~5, we close \(a_1\) and erase $\{a_0,a\}$. 
In Step~6, we close \(a\) and erase $\{a_0,a_1\}$. 
Similarly, we can erase the edges $\{a,a_i\}$ and $\{a_i,a_{i+1}\}$ for \(i = 1, \ldots, s-3\), the edge $\{a,a_{s-2}\}$, and likewise all the edges between \(b, b_0, \ldots, b_{s-3}\).

\subsection{Lower bound}

%\section{Proofs of Theorems \ref{result_1} and \ref{result_2}}

%In this section, we apply the hyperforest representation of the erase process and the semi-invariants \(s_i = f_i + 2c_i\) and \(Q\) introduced earlier. 

%\subsection{Lower Bound of Theorem \ref{result_1}}

We start the proof from the following auxiliary result that describes some scenarios that guarantee positiveness of $Q$.
%In Lemma \ref{auxiliary_lemma_for_Prop_2}, we isolate several recurring hyperforest configurations that guarantee a positive increase of \(Q\). These patterns will be used to secure the required total increment of $Q$ in the lower-bound proofs.

\begin{lemma}\label{auxiliary_lemma_for_Prop_2}
Let $G$ be an erasable graph and $H$ be its hyperforest. Then, there is an erase procedure satisfying the following. % \( G \) consist of distinct subgraphs as described below. Then, there exists a sequence of steps, each following Algorithm \ref{alg}, such that by the end of this sequence, the value of \( Q \) increases in each case as follows:
\begin{enumerate}
    \item If $H$ consists of an isolated vertex and two disjoint hyperedges of the same size bigger than 2, then $Q\geq 1$. \label{case1_of_auxiliary_lemma}
    \item If $H$ consists of two disjoint hyperedges $F_1,F_2$ such that $|F_2|=|F_1|+1\geq 4$, then $Q\geq 2$. \label{case2_of_auxiliary_lemma}
    \item If $H$ consists of an isolated hyperedge $F$ and hyperedges $F_1,F_2$ such that $|F_1\cap F_2|=1$ and $|F_1\cup F_2|=|F|+1\geq 4$, then $Q\geq 1$. \label{case3_of_auxiliary_lemma}
\end{enumerate}
\end{lemma}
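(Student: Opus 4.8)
The plan is to build, in each case, an erase procedure for $G$ starting from the given hyperforest $H$ in which one can point to steps with strictly positive excess $Q_i := s_i - s_{i-1} - 1$. By Corollary~\ref{S(H)} every step has $Q_i \ge 0$, so it suffices to display one step with $Q_i \ge 1$ for parts~\ref{case1_of_auxiliary_lemma} and~\ref{case3_of_auxiliary_lemma}, and steps with total excess at least $2$ for part~\ref{case2_of_auxiliary_lemma}. The first thing I would record is a ``price list'' read off from the proof of Lemma~\ref{events}: writing the increment of step $i$ as $\overrightarrow{v}+(\lambda,0)$ with $\overrightarrow{v}\in\{(1,0),(1,1),(0,1),(-1,1)\}$ and $\lambda\ge0$, one has $Q_i=\lambda$ if $\overrightarrow{v}\in\{(1,0),(-1,1)\}$, $Q_i=1+\lambda$ if $\overrightarrow{v}=(0,1)$, and $Q_i=2+\lambda$ if $\overrightarrow{v}=(1,1)$; moreover $\overrightarrow{v}=(0,1)$ or $(1,1)$ occurs exactly when the edge-operation on the hyperedge $F\ni e_i$ splits $F$ into, respectively, a singleton plus a piece of size $\ge2$, or two pieces of size $\ge2$, while $\lambda$ counts the extra hyperedges produced by the vertex-operations. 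Hence a step is expensive ($Q_i\ge1$) precisely when the erased edge is a bridge of its hyperedge, or when closing $v_i$ genuinely breaks hyperedges apart.

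The common engine for all three cases is the following. In each case $G$ is disconnected, and its connected components are exactly the components of $H$: the hyperedges $F_1,F_2$ together with the isolated vertex in part~\ref{case1_of_auxiliary_lemma}; $F_1$ and $F_2$ in part~\ref{case2_of_auxiliary_lemma}; $F$ and $F_1\cup F_2$ (with cut vertex $w$) in part~\ref{case3_of_auxiliary_lemma}. Since $G$ is erasable there is an erasable edge $e=\{x,y\}$ inside one component $C$, with witness partition $(V_1,V_2,\{v\})$ in which $e$ is the only cross edge; because every other component of $G$ is connected, it lies entirely on one side of $(V_1,V_2)$ or is split only at the single vertex $v$, while $C$ itself is split by $e$ after possibly deleting $v\in C$. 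The ``leftover'' pieces — the isolated vertex in part~\ref{case1_of_auxiliary_lemma}, the isolated hyperedge $F$ in part~\ref{case3_of_auxiliary_lemma}, the other hyperedge in part~\ref{case2_of_auxiliary_lemma} — act as balancing filler when matching $|V_1|=s$ and $|V_2|=t$, and this is exactly where the relations $|F_1|=|F_2|$, $|F_2|=|F_1|+1$, $|F_1\cup F_2|=|F|+1$ are used. Analysing these constraints, I would show that, unless $G$ is stuck (see below), one can choose the first erasable edge so that the step is expensive: $e$ is forced to be a pendant edge of $C$, a bridge of $C$ with both sides nontrivial, or an edge whose removal together with a cut vertex $v\in C$ disconnects $C$, the closing of $v$ then breaking a hyperedge. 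For part~\ref{case2_of_auxiliary_lemma} one repeats the argument once more: after the first expensive step the hyperforest again has a small piece together with two hyperedges of sizes $\ge3$ differing by at most one, and the parity forced by $|F_2|=|F_1|+1$ excludes the only cheap escape (a split into two singletons), yielding a second expensive step. For part~\ref{case3_of_auxiliary_lemma} it is cleanest to first try closing the cut vertex $w$: if $w$ is a cut vertex of $G[F_1]$ or of $G[F_2]$ the vertex-operation at $w$ breaks that hyperedge and gives $Q\ge1$, and otherwise one falls back on a pendant edge of $F_1$, $F_2$, or $F$.

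The step I expect to be the main obstacle is ruling out the near-tight, ``stuck'' configurations — those $G$ in which the relevant component $C$ is so highly connected (for instance a clique or a cycle, with the other components connected as well) that no witness partition of sizes $s,t$ with a unique cross edge exists. One checks directly that such a $G$ has no erasable edge at all, so it cannot occur inside a full erase process; equivalently, for these $G$ the total excess $Q=2|V(G)|-(f(H)+2c(H))-|E(G)|$, which is always $\ge0$, would be exactly $0$, contradicting the claimed bound. Making this ``erasability forbids tightness'' argument uniform in $s$ and $t$, rather than by small-case checks, and keeping simultaneous control of the single edge-operation and the possibly several vertex-operations inside one step, is where the bulk of the work sits; I would organize it around the price list above together with a short auxiliary statement that a clique or a cycle appearing as a component of $G$ contains no erasable edge once every other component is connected.
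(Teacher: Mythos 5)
Your setup is sound: the ``price list'' for $Q_i$ in terms of $\overrightarrow{v}$ and $\lambda$ is exactly right, and the idea of exploiting the sizes $|V_1|=s$, $|V_2|=t$ of the witness partition against the component structure of $H$ is the engine of the actual proof. But as written the argument has three genuine gaps. First, the key structural fact is never established: in each case the erasable edge $e$ and the closed vertex $v$ must lie in \emph{different} hyperedges (in case 3, in different components $F_e,F_v$), because if they lay in the same hyperedge the remaining connected pieces could not be assembled into parts of sizes exactly $s$ and $t$ with $e$ as the unique crossing edge. Your sketch explicitly allows $v\in C$ (``$C$ itself is split by $e$ after possibly deleting $v\in C$''), so you never get the conclusion that the edge-operation splits $F_e$ into two nonempty parts and hence $c_1=c_0+1$ — which is what gives $Q_1\ge 1$ for free in cases 1 and 3 once $f_1\ge f_0$.

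Second, your route to the second unit of excess in case 2 does not work. The cheap escape is not ``a split into two singletons'' (that is already excluded because $|F_1|,|F_2|\ge 3$, not by parity); it is the subcase where $F_e$ splits into a singleton plus a piece of size $|F_e|-1$ and the vertex-operations create no new hyperedges ($\lambda=0$), which yields only $Q_1=1$. One must then either show that $\lambda\ge 1$ is forced (this happens when $e$ lies in the \emph{smaller} hyperedge: the size bookkeeping of the witness partition forces $G[F_v\setminus\{v\}]$ to be disconnected), or observe that when $e$ lies in the larger hyperedge and $\lambda=0$ the resulting hyperforest $H_1$ is precisely the configuration of case 1, so a later step contributes the missing $Q_i\ge 1$. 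This reduction between the cases is the backbone of the paper's proof and is absent from yours; the same reduction is also needed in case 3 when $|F_e|=2$. Third, the ``stuck configurations'' paragraph is circular: you propose to exclude tight configurations because they would make $Q=0$, ``contradicting the claimed bound'' — but the claimed bound is the statement being proven. In case 3, ``first try closing the cut vertex $w$'' also presupposes an erasable edge whose witness closes $w$, which need not exist; the analysis has to work for whatever erasable edge $G$ happens to admit.
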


\begin{proof}

We consider separately the three cases from the assertion of the lemma.

\begin{enumerate}
\item Let $H$ consist of an isolated vertex and two disjoint hyperedges of the same size bigger than 2.  As $G$ is erasable, it has an erasable edge $e$ and a closed vertex $v$. Note that $e$ and $v$ must belong to different hyperedges $F_e$ and $F_v$ since otherwise Erase$(e,G)$ is not possible. Therefore, Erase$(e,G)$ splits $F_e$ into two non-trivial sets $V_1$ and $V_2$, and one of these sets has size at least 2 as $|F_e|\geq 3$. Therefore, $(f_1,c_1)=(f_0,c_0)+(\lambda,1)$, where $\lambda\geq 0$. We get $Q_1\geq 1$ and then $Q\geq 1$ due to Corollary~\ref{S(H)}, as needed.

\item Similarly, if $E(H)=\{F_1,F_2\}$, where $F_1$ and $F_2$ are disjoint and $|F_2|=|F_1|+1\geq 4$, then an erasable edge $e$ and a closed vertex $v$ belong to different hyperedges. Again, Erase$(e,G)$ splits $F_e$ into $V_1,V_2$. Without loss of generality, $|V_2|\geq|V_1|$. If $|V_1|>1$, then a new edge is created implying $(f_1,c_1)\geq (f_0,c_0)+(1,1)$ that gives $Q_1\geq 2$ and $Q\geq 2$ due to Corollary~\ref{S(H)}, as needed. If $|V_1|=1$ and $|F_e|=|F_v|+1\geq 4$, then the edge-operation does not decrease the number of edges, and so $(f_1,c_1)= (f_0,c_0)+(\lambda,1)$, where $\lambda\geq 0$. If $\lambda\geq 1$, then $Q_1\geq 2$. If $\lambda=0$, then the only vertex-operation does not produce new edges meaning that the hyperforest $H_1$ at the next step of the process consists of one isolated vertex and two disjoint hyperedges of the same size $|F_1|\geq 3$ --- this reduces to the first case, where $Q_2\geq 1$. Therefore, $Q\geq 2$ due to Corollary~\ref{S(H)}. Finally, if $|V_1|=1$ and $|F_v|=|F_e|+1$, then $G[F_v\setminus\{v\}]$ must be disconnected, meaning that the vertex-operation applied to $F_v$ creates an additional hyperedge. Then, recalling that $|F_e|\geq 3$, we get $(f_1,c_1)\geq(f_0,c_0)+(1,1)$, implying $Q_1\geq 2$ and completing the proof.

\item If $E(H)=\{F,F_1,F_2\}$, where $|F_1\cup F_2|=|F|+1\geq 4$ and $|F_1\cap F_2|=1$, then an erasable edge $e$ and a closed vertex $v$ have to belong to different component $F_e$ and $F_v$, as well. If $|F_e|\geq 3$, then erasing $e$ does not decrease the number of edges, implying $Q_1\geq 1$ as $f_1\geq f_0$ and $c_1\geq c_0+1$. Without loss of generality, it remains to consider the case $e\in F_1$, $|F_1|=2$, and $|F_2|=|F|\geq 3$. If $f_1\geq f_0$, then $Q_1\geq 1$ as needed. If $f_1=f_0-1$, then the vertex operation applied to $F_v=F$ does not create new hyperedges, leading to the hyperforest $H_1$ consisting of one isolated vertex and two hyperedges of size $|F|\geq 3$ and reducing the problem to the first case. But then $Q_2\geq 1$, completing the proof due to Corollary~\ref{S(H)}.

\end{enumerate}

\end{proof}

We finish this section with the proof of the lower bound in Theorem \ref{result_1}, reiterated below.

\begin{proposition}
Let $G$ be an erasable graph on $2s+1$ vertices with $s>2$. Then $|E(G)|\leq 4s-4$.
\end{proposition}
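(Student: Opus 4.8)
The plan is to run an erase process on the optimal erasable graph $G$ and use the semi-invariant bookkeeping from Corollary~\ref{S(H)} together with the case analysis of Lemma~\ref{auxiliary_lemma_for_Prop_2}. By~\eqref{eq:Q}, it suffices to show that $Q\ge 3$, since $m=|E(G)|$ and $f_0+2c_0-m = 2(2s+1)-3 - Q$ would then give $m \le 4s-1-Q \le 4s-4$. So the entire task reduces to exhibiting, for a suitable choice of erase procedure, a total excess of at least $3$. The first step is to pin down the possible initial hyperforests $H_0$: since $G$ is $K_{s,t}$-free with $t=s$ and has no copy of $K_{s,s}$, and since each hyperedge induces a connected subgraph on a component of $G_0$, one argues that $H_0$ has at most three nontrivial hyperedges and, more importantly, that $f_0+2c_0$ is forced to be small — specifically one should rule out $f_0+2c_0\le 5$ leading to $m$ too large, or rather show directly that whenever $f_0+2c_0$ is small the structure of $H_0$ falls into one of the cases handled in Lemma~\ref{auxiliary_lemma_for_Prop_2} (or an easy variant), each of which contributes the needed excess.

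Concretely, I would argue as follows. If $c_0\ge 3$, or if $f_0\ge 5$, then $f_0+2c_0$ is already large enough that $Q\ge 0$ from Corollary~\ref{S(H)} suffices to conclude $m\le 4s-4$; so we may assume $G_0$ has at most two connected components and the hyperforest is small. The remaining configurations — two disjoint hyperedges of equal size, two disjoint hyperedges of sizes differing by one, an isolated vertex plus two disjoint equal hyperedges, an isolated hyperedge plus two hyperedges sharing a vertex, etc. — are exactly (or reduce in one or two erase steps to) the three cases of Lemma~\ref{auxiliary_lemma_for_Prop_2}. One then adds up: in the balanced two-component case ($s=t$ forces the two parts of any near-copy of $K_{s,s}$ to be balanced), repeated application of the lemma across the beginning of the process accumulates $Q\ge 3$, because each "split into two nontrivial halves" event contributes $2$ to $Q$ via the increment $(1,1)$, and such a split is forced early on by the structure of $K_{s,s}$.

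The key quantitative point I expect to need is that an erasable edge $e$ and its closed witness vertex $v$ always lie in different hyperedges (else $\mathrm{Erase}(e,G)$ is impossible, as in the proof of Lemma~\ref{auxiliary_lemma_for_Prop_2}), so the edge-operation genuinely splits a hyperedge $F_e$ of size $\ge 3$ into two nonempty parts; when $s=t$ one also uses that the two sides $V_1^i,V_2^i$ of the witness partition have sizes $s$ and $t=s$, which controls how the splits can occur and prevents the cheap increments $(0,1)$ and $(-1,1)$ from being used too often at the start. Assembling these observations, I would count three unavoidable "expensive" steps (each with $Q_i\ge 1$, at least one with $Q_i\ge 2$), concluding $Q\ge 3$ and hence $|E(G)|\le 4s-4$.

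The main obstacle I anticipate is the enumeration of initial hyperforests $H_0$ together with the verification that every "small" case actually produces $Q\ge 3$ rather than merely $Q\ge 1$ or $Q\ge 2$: Lemma~\ref{auxiliary_lemma_for_Prop_2} as stated delivers only $1$ or $2$ in its individual cases, so one must chain cases (e.g. a $Q\ge 2$ situation that, after the relevant step, leaves a hyperforest still matching one of the lemma's hypotheses, yielding a further $Q\ge 1$) and be careful that the erase procedure can be chosen globally consistently. Handling the degenerate configurations — an isolated vertex together with a single large hyperedge, or three hyperedges in a path — and confirming they cannot occur for an $|E(G)|$ exceeding $4s-4$ while $G$ is still $K_{s,s}$-free, is where the real case-work lies; the $s=t$ balance condition is the lever that makes all of this go through, and checking it in each subcase is the delicate part.
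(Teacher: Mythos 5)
Your overall strategy is the paper's: reduce to showing $Q\ge 3$ via~\eqref{eq:Q} and Corollary~\ref{S(H)}, discard the initial configurations with $f_0+2c_0\ge 6$, and chain applications of Lemma~\ref{auxiliary_lemma_for_Prop_2}. But the proposal stops exactly where the proof begins. The entire substance of the argument is the case analysis you defer: after reducing to $(f_0,c_0)\in\{(1,1),(1,2)\}$, one must enumerate the possible first transitions $(f_1,c_1)$ from Lemma~\ref{events}, rule out the impossible ones (e.g.\ $(f_1,c_1)=(0,2)$ and $(1,2)$ cannot occur from a connected $G_0$), and then, for each of the surviving cheap transitions $(2,1)$, $(2,2)$, $(3+\lambda,1)$, track the hyperforest for several further steps until a configuration covered by Lemma~\ref{auxiliary_lemma_for_Prop_2} is reached or $Q$ accumulates to $3$ outright. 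None of this is carried out; "I would count three unavoidable expensive steps" is the conclusion to be proved, not an argument. In particular the $(f_1,c_1)=(3+\lambda,1)$ branch requires a genuinely different idea (repeatedly closing the common vertex $v_1$ of the hyperstar until that is no longer possible, then analysing the forced step $t+1$), which your sketch does not anticipate at all.

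Two of the "key quantitative points" you rely on are moreover false in general. First, the claim that an erasable edge $e$ and its closed vertex $v$ always lie in different hyperedges holds only in the special configurations of Lemma~\ref{auxiliary_lemma_for_Prop_2}, where the hyperedge sizes force it; in the hyperstar phase of the paper's proof the closed vertex $v_1$ lies in \emph{every} hyperedge, including the one containing the erased edge, and this is precisely why that case needs separate treatment. Second, the claim that the balance $s=t$ "prevents the cheap increments from being used too often at the start" and forces an early $(1,1)$ split is not true: the transition $(f_0,c_0)=(1,1)\to(f_1,c_1)=(2,1)$ has increment $(1,0)$ and $Q_1=0$, and the paper must work through three further sub-cases before extracting the excess. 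So while the skeleton matches the paper, the proof as written has a genuine gap: the case analysis that produces $Q\ge 3$ is absent, and the shortcuts proposed to avoid it do not hold.
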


\begin{proof}
Due to the bound~\eqref{eq:Q}, it suffices to show that there exists an erase process such that $Q\geq 3$ or, in other words, there exist steps $i\in I$ such that $Q_i\geq 1$ for $i\in I$ and $\sum_{i\in I}Q_i\geq 3$. Observe that if the graph \( G \) is erasable, then every spanning subgraph of \( G \) is also erasable. Consequently, it does not matter which erasable edge we select at each step --- we will eventually reach an empty graph always. We can assume that the initial graph \( G_0=G \) is either a connected graph with \((f_0, c_0) = (1,1)\) or a disjoint union of an isolated vertex and a connected graph on \( 2s \) vertices with \((f_0, c_0) = (1,2)\), since otherwise $f_0+2c_0 \geq 6$ implying $|E(G)|\leq 4s-4$ due to~\eqref{eq:Q}. 

If $(f_0,c_0)=(1,2)$, then $H_0$ is a hyperforest comprising an isolated vertex $u$ and one hyperedge $U$ of size $2s$. Due to the bound~\eqref{eq:Q} and Corollary~\ref{S(H)}, it suffices to show that either $Q_1\geq 1$, or $Q_2\geq 1$, or $\sum_{i\geq 4}Q_i\geq 1$. The only two transitions that give $Q_1=0$ are $(f_1,c_1)=(f_0,c_0)+(1,0)$ and $(f_1,c_1)=(f_0,c_0)+(-1,1)$. The second transition is impossible since it eliminates all hyperedges and then every vertex becomes a connected component meaning that $c_1=2s+1>2$. Then, we may assume that $(f_1,c_1)=(2,2)$, i.e. $U$ splits into two hyperedges $U_1,U_2$, where $|U_1|=s+1$, $|U_2|=s$, and $U_1\cap U_2=\{v_1\}$. As above, we may further assume that $(f_2,c_2)=(3,2)$, which is only possible if $U_1$ splits into two hyperedges $V$ and $\{v_1,v_2\}$ with one common vertex $v_2$. But then, at the third step we may close the isolated vertex $u=:v_3$ and erase the edge $\{v_1,v_2\}=:e_3$. Therefore, $H_3$ consists of the isolated vertex $u$ and two disjoint hyperedges $U_1\setminus\{v_1\}$ and $U_2$ of size $s$. By Lemma~\ref{auxiliary_lemma_for_Prop_2}, $\sum_{i\geq 4}Q_i\geq 1$, completing the proof.

It remains to consider the most complicated case $(f_0,c_0)=(1,1)$. By Lemma~\ref{events}, we have the following possibilities: $(f_1,c_1)=(2,1)$, $(f_1,c_1)=(2+\lambda,2)$, $(f_1,c_1)=(1,2)$, $(f_1,c_1)=(0,2)$, and $(f_1,c_1)=(3+\lambda,1)$, where $\lambda\geq 0$. The fourth case is actually impossible: if no hyperedges survived after the first erase procedure, then every vertex becomes a connected component, meaning that $c_1=2s+1>2$. The third case is also impossible: if $(f_1,c_1)=(1,2)$, then after the first erase procedure $H_1$ consists of an isolated vertex and a hyperedge $U$ of size $2s$. The isolated vertex may emerge only after the edge-operation. Since the vertex-operation is applied to $U$, the graph $G_0[U\setminus \{v_1\}]$ is connected, making the first erase procedure impossible. We also observe that, due to Corollary~\ref{S(H)}, $(f_1,c_1)\geq(3,2)$ immediately implies $Q\geq 3$. Therefore, it only remains to consider the following three cases: $(f_1,c_1)=(2,1)$, $(f_1,c_1)=(2,2)$, and $(f_1,c_1)=(3+\lambda,1)$.

\paragraph{Let $(f_1,c_1)=(2,1)$.} The hyperforest $H_1$ produced after the first Erase procedure has two hyperedges --- $F_1$ and $F_2$. Note $|F_1|=|F_2|=s+1$ and $F_1\cap F_2=\{v_1\}$. The next closed vertex $v_2$ should not be equal to $v_1$ since otherwise either $G_0[F_1\setminus\{v_1\}]$ or $G_0[F_2\setminus\{v_1\}]$ is disconnected meaning that the respective vertex-operation should have created more hyperedges at the first step. Moreover, the erasable edge $e_2$ and the vertex $v_2$ should lie in different hyperedges (call them $F_e$ and $F_v$, respectively). Indeed, if $F_v=F_e$, then erasing $e_2$ would not be possible, as the induced subgraphs $G[F_v]$ and $G[F_e]$ are both connected and have $s+1$ vertices. We get that $F_e=V_1\sqcup V_2$, where $v_1\in V_1$ and $e_2$ is the bridge between $V_1$ and $V_2$. We then consider several cases depending on whether $|V_1|=1$ or $|V_2|=1$. Since $|V_1|+|V_2|=s+1$, at least one of these sets has size greater than 1.

\begin{enumerate}

\item If $|V_1|>1$ and $|V_2|>1$, then the edge-operation applied at the second step splits $F_e$ into two disjoint hyperedges $V_1,V_2$ and creates an additional connected component. Moreover, this edge-operation is only possible if $G_1[F_v\setminus\{v_2\}]$ is disconnected. Therefore, the vertex-operation applied to $F_v$, replaces it with at least two hyperedges, see Figure~\ref{pic1}. Therefore, $Q_2=(f_2-f_1)+2(c_2-c_1)-1\geq 3$ implying $Q\geq 3$ due to Corollary~\ref{S(H)}, as needed. 
\begin{figure}[h]
\begin{tikzpicture}[scale=1.0, line width=0.9pt,baseline=(current bounding box.north)]
%\path[use as bounding box] (-3, -1.2) rectangle (10.8, 0.5);
\path[use as bounding box] (-3, -1.4) rectangle (14.2, 0.6);
\def\XR{1.3}
\def\YR{0.35}

%================ ЛЕВАЯ ЧАСТЬ =====================
\draw (-0.5,0) ellipse ({\XR} and {\YR});
\draw (1.6,0) ellipse ({\XR} and {\YR});

%\draw[line width=1.6pt] (-0.25,0.6) -- (-0.25,-0.6);
%\draw (-0.55,0.25) -- (-0.15,0.05);

% Вертикальное ребро внутри него (пунктир + жирные концы)
\draw[dashed, line width=1pt] (-0.5,0.25) -- (-0.5,-0.25);
\fill (-0.5,0.25) circle (2pt);
\fill (-0.5,-0.25) circle (2pt);
\node at (-0.2,0.05) {$e$};

\fill (0.6, 0) circle (2pt);
\node[above] at (0.6, 0.5) {$v_1$};

\node at (-0.5,-1.20) {$|F_1|=s+1$};
\node at (2.3,-1.20) {$|F_2|=s+1$};

\draw[->] (4.0,0) -- (5.0,0);

%================ ПРАВАЯ ЧАСТЬ =====================

% 1. Отдельная шапочка с плоским концом вправо
\begin{scope}[shift={(7.0,0)}]
  \draw[xscale=-1]
  (0,\YR) arc[start angle=90, end angle=-90,
        x radius=\XR, y radius=\YR] -- cycle;
\end{scope}

% 2. Шапочка с плоским концом влево
\begin{scope}[shift={(7.7,0)}]
  \draw[xscale=1]
    (0,\YR) arc[start angle=90, end angle=-90,
    x radius=\XR, y radius=\YR] -- cycle;
\end{scope}

% 3. Шапочка с плоским концом вправо
\begin{scope}[shift={(10.5,0)}]
  \draw[xscale=-1.5]
  (0,\YR) arc[start angle=90, end angle=-90,
        x radius=\XR, y radius=\YR] -- cycle;
\end{scope}

% 4. Эллипс
\draw (11.2,0) ellipse ({\XR} and {\YR});

% Вершина v
\fill (8.75, 0) circle (2pt);
\node[above] at (8.75, 0.5) {$v_1$};

% Вершина u
\fill (10.2,0) circle (2pt);
\node[above] at (10.2,0.5) {$v_2$};

\end{tikzpicture}

\caption{}
\label{pic1}
\end{figure}

\item If $|V_1|=s>2$ and $|V_2|=1$, then the edge-operation applied at the second step introduces a new connected component but does not increase the number of hyperedges. The erase procedure Erase$(e_2,G_1)$ with closed vertex $v_2$ separates the isolated vertex $V_2$ and the respective edge operation creates the hyperedge $V_1$ that sends edges only to $v_2$ in $F_v$. Moreover, similarly to the previous case, the vertex-operation contributes positively to the number of edges. If it splits $F_v$ in at least three hyperedges, then $Q_2\geq 3$, completing this case. Assume $F_v$ splits into two hyperedges. Since the edge-operation isolates the single vertex $V_2$, the vertex operation isolates a single vertex $u$ as well. Actually, $u=v_1$, since the common vertex $v_1$ of $V_1$ and $F_v$ do not have neighbours in $F_v\setminus\{v_1,v_2\}$, see Figure~\ref{pic2}. At this point, $Q_1+Q_2\geq 2$. %\textcolor{red}{need to mention that Q already got +2}%Then $v^1$ has neighbours in $G_1[G_v\setminus\{v^2\}]$. It means that `closing' the vertex $v^2$ and erasing $e^2$ leaves some neighbours of $G_1[V_1]$ in $F_v$ --- a contradiction with the definition of the Erase procedure. Therefore, $u=v^1$, see Figure~\ref{pic2}.

\begin{figure}[h]
\begin{tikzpicture}[scale=1.0, line width=0.9pt, baseline=(current bounding box.north)]
\path[use as bounding box] (-3, -1.4) rectangle (14.2, 0.6);

\def\XR{1.3}
\def\YR{0.35}

%================ ЛЕВАЯ ЧАСТЬ =====================
\draw (-0.5,0) ellipse ({\XR} and {\YR});
\draw (1.6,0) ellipse ({\XR} and {\YR});

%\draw[line width=1.6pt] (-0.25,0.6) -- (-0.25,-0.6);
%\draw (-0.55,0.25) -- (-0.15,0.05);
%\node at (0.05,0.05) {$e$};

% Вертикальное ребро внутри него (пунктир + жирные концы)
\draw[dashed, line width=1pt] (-0.5,0.25) -- (-0.5,-0.25);
\fill (-0.5,0.25) circle (2pt);
\fill (-0.5,-0.25) circle (2pt);
\node at (-0.2,0.05) {$e$};

\fill (-1.5,0) circle (2pt); % точка внутри левого эллипса
\node[above] at (-1.5,0.3) {$V_2$};
\fill (0.6,0) circle (2pt);  % v между левыми эллипсами
\node[above] at (0.6,0.3) {$v_1$};

\node at (-0.5,-1.0) {$|F_1|=s+1$};
\node at (2.3,-1.0) {$|F_2|=s+1$};

\draw[->] (4.0,0) -- (5.0,0);

%================ ПРАВАЯ ЧАСТЬ =====================

% левый большой эллипс и подпись s
\draw (7.6,0) ellipse ({\XR} and {\YR});
\node at (7.6,-1.0) {$s$};

% малый центральный эллипс (НЕ шапочка, НЕ линза)
\def\xr{0.72}
\def\yr{0.28}
\draw (9.2,0) ellipse ({\xr} and {\yr});
\draw[-] (8.65,0) -- (9.7,0);

% одиночная чёрная точка
\fill (5.5,0) circle (2.2pt);
\node[above] at (5.5,0.3) {$V_2$};

% точки v и u по краям малого эллипса
\fill (9.4-\xr,0) circle (2pt);
\node[above] at (9.4-\xr,0.3) {$v_1$};
\fill (9.05+\xr,0) circle (2pt);
\node[above] at (9.05+\xr,0.3) {$v_2$};
%\Edge{(v)--(u)}

% подпись 2 под малым эллипсом
\node at (9.2,-1.0) {$2$};

% правый большой эллипс и подпись s
\draw (10.9,0) ellipse ({\XR} and {\YR});
\node at (10.9,-1.0) {$s$};
\end{tikzpicture}
\caption{}
\label{pic2}
\end{figure}

Then, at the next step we may erase the edge $e_3=\{v_1,v_2\}$ by `closing' the isolated vertex $V_2$. The hyperforest $H_3$ is this isolated vertex with two disjoint hyperedges of size $s$. Due to Lemma \ref{auxiliary_lemma_for_Prop_2}, it contributes the missing 1 to $Q$.

\item If $|V_1|=1$ and $|V_2|=s>2$, then the edge-operation splits the hypergraph $H_1$ into two connected components --- $H_1[V_2]$ and $H_1[F_v]$. If the vertex-operation splits $F_v$ into at least three hyperedges, Corollary~\ref{S(H)} completes the proof. If $F_v$ splits into two hyperedges or does not change (see Figure~\ref{pic4}), the hyperforest $H_2$ contributes the missing 2 to $Q$ due to Lemma~\ref{auxiliary_lemma_for_Prop_2}, completing the proof.

\begin{figure}[h]
\begin{tikzpicture}[scale=1.0, line width=0.9pt,baseline=(current bounding box.north)]
% растягиваем bbox, чтобы центрировалось как нужно
\path[use as bounding box] (-3, -3.7) rectangle (13.2,1.3);

\def\XR{1.3}
\def\YR{0.35}

%================ ЛЕВАЯ ЧАСТЬ (исходная) =====================
\draw (-0.7,0) ellipse ({\XR} and {\YR});
\draw (1.4,0) ellipse ({\XR} and {\YR});

%\draw[line width=1.6pt] (-0.25,0.6) -- (-0.25,-0.6);
%\draw (-0.55,0.25) -- (-0.15,0.05);

\draw[dashed, line width=1pt] (-0.7,0.25) -- (-0.7,-0.25);
\fill (-0.7,0.25) circle (2pt);
\fill (-0.7,-0.25) circle (2pt);
\node at (-1.09,0.05) {$e$};

\fill (0.4, 0) circle (2pt);
\node[above] at (0.4,+0.5) {$v_1$};

\node at (-1.0,-1.20) {$|F_1|=s+1$};
\node at (2.0,-1.20) {$|F_2|=s+1$};

\draw[->] (3.0,0) -- (4.0,0);

%================ ПРАВАЯ ВЕРХНЯЯ ЛИНИЯ (λ = 1) =====================

% одиночный эллипс с подписью s
\draw (5.5,0) ellipse ({\XR} and {\YR});
\node at (5.5,-0.8) {$s$};

% пересечение: шапочка влево (как эллипс, с "плоским" стыком) + эллипс справа
% (делаем как два перекрывающихся эллипса для простоты визуально)
\draw (8.5,0) ellipse ({\XR} and {\YR});   % левый
\draw (10.4,0) ellipse ({\XR} and {\YR});  % правый

% точки v и u в зоне пересечения
\fill (7.8,-0.05) circle (2pt); \node[above] at (7.8,0.5) {$v_1$};
\fill (9.4,-0.05) circle (2pt); \node[above] at (9.4,0.5) {$v_2$};

\node at (12.6,-0.05) {$\lambda=1$};

%================ ПРАВАЯ НИЖНЯЯ ЛИНИЯ (λ = 0) =====================
% стрелка вниз-вправо
\draw[->] (3.0,-1.7) -- (4.0,-1.7);

% два раздельных эллипса с подписями s и s+1
\draw (5.5,-1.7) ellipse ({\XR} and {\YR});
\node at (5.5,-2.8) {$s$};

\draw (8.5,-1.7) ellipse ({\XR} and {\YR});
\node at (8.5,-2.8) {$s+1$};

\node at (12.6,-1.7) {$\lambda=0$};
\end{tikzpicture}
\caption{}
\label{pic4}
\end{figure}

\end{enumerate}

\paragraph{ Let $(f_1,c_1)=(2,2)$.} We get $Q_1=2$. Since $H_1$ consists of two disjoint hyperedges of sizes $s$ and $s+1$, it contributes an extra 2 to $Q$ due Lemma \ref{auxiliary_lemma_for_Prop_2}, completing the proof.

\paragraph{ Let $(f_1,c_1)=(3+\lambda,1)$, where $\lambda\geq 0$.} We have $Q_1=\lambda+1$. The hyperforest $H_1$ is connected and consists of $3+\lambda$ hyperedges that share one common vertex $v_1$. At every next step, whenever it is possible, we close $v_1$ and erase some edge. Each such move increases the number of hyperedges by 1 and keeps the hypergraph connected. Assume that $t$ is the last such step. We get a hypertree $H_t$ with $t+2+\lambda$ hyperedges $F_1,F_2,\ldots, F_{t+2+\lambda}$ such that any two hyperedges have the only common vertex $v_1$. Note that every hyperedge has size at most $s+1$. The graph $G_t$ is still non-empty and erasable.

It remains to prove that $\sum_{i\geq t+1}Q_i\geq 2$. We then consider the step $t+1$, where we close a vertex $v_{t+1}$ and erase an edge $e_{t+1}$ in $G_{t+1}$. Observe that the vertex $v_{t+1}$ belongs to exactly one $F_v:=F_j$, $j\in[t+2+\lambda]$, and $e\notin F_v$ as the graph $G_{t+1}[V(G_{t+1})\setminus(F_v\setminus\{v_{t+1}\})]$ is connected and has size at least $s+1$. Denote the hyperedge containing $e$  by $F_e$. Let $F_e=V_1\sqcup V_2$,  where $v_1\in V_1$ and $e$ is bridge between $V_1$ and $V_2$. The edge-operation disconnects $V_2$ and, therefore, creates a new connected component. Consider the following cases:

\begin{enumerate}

\item If $|V_1|>1$, we get $Q_{t+1}\geq 2$, unless $|V_2|=1$. In the latter case, the vertex-operation must disconnect $v_1$ from $F_v\setminus\{v_1,v_{t+1}\}$, creating an additional hyperedge and giving $Q_{t+1}\geq 2$, as needed. 

\item If $|V_1|=|V_2|=1$, then the vertex-operation disconnects $v_1$ from $F_v\setminus\{v_1,v_{t+1}\}$ that has to have size $s-1$ (joining this set with $V_2$ makes up one of the two parts of the partition witnessing the erase procedure for $e_{t+1}$). Recalling that, during the first $t$ steps, we erased bridges in $G_0\setminus v_1$ between sets of size $s$, one of these sets was always $F_v\setminus v_1$. But then there could be only one bridge, i.e. $t=1$. At step $t+1=2$, the total number of hyperedges does not decrease and the total number of connected components increases, i.e. $Q_2\geq 1$. If $\lambda\geq 1$ or $Q_2\geq 2$, then the proof is already completed. So, it only remains to consider the case $\lambda=0$ and $f_2=f_1$. Since $\lambda=0$ and $t=1$, we have $t+2+\lambda=3$, i.e. after the first step we had three hyperedges --- two hyperedges of size $s+1$ and one hyperedge $F_e$ of size 2. After the second step, we have the isolated vertex $V_2$ and three hyperedges $F'_1,F'_2,F'_3$, where $F_v=F'_1\cup F'_2$ and $F'_3$ share $v_1$, $F'_1,F'_2$ share $v_{t+1}$, and $F'_2$ consists of the single edge $\{v_1,v_{t+1}\}$. At the third step, let us close the isolated vertex $V_2$ and erase the edge $\{v_1,v_2\}$. We get that $H_3$ comprises the isolated vertex and two isolated hyperedges of size $s$.  Due to Lemma \ref{auxiliary_lemma_for_Prop_2}, $\sum_{i\geq 4}Q_i\geq 1$, completing the proof.

\item If $|V_1|=1$ and $2\leq |V_2|\leq s-1$, then the edge-operation increases the number of connected components and does not change the number of hyperedges, while the vertex-operation increases the number of hyperedges, implying $Q_{t+1}\geq 2$.

\item If $|V_1|=1$ and $|V_2|=s$, then the edge-operation creates an isolated hyperedge of size $s$ and does not change the number of hyperedges, i.e. $Q_{t+1}\geq 1$. As in the second case ($|V_1|=|V_2|=1$), we may assume that $t=1$, $\lambda=0$, and $f_2=f_1$. Therefore, $H_2$ comprises an isolated hyperedge of size $s$ and two other hyperedges that share one common vertex. Due to Lemma \ref{auxiliary_lemma_for_Prop_2}, $\sum_{i\geq 3}Q_i\geq 1$, completing the proof.

\end{enumerate}

\end{proof}

\section{Proof of Theorem~\ref{result_2}}
\label{sc:2_proof}

\subsection{Upper bound}

We prove the upper bound in Theorem~\ref{result_2} separately in the two cases $gcd(s,t)=1$ and $gcd(s,t)\neq 1$.

\subsubsection{\( \gcd(s, t) = 1 \)}\label{Exm1}

The example of an erasable graph with \(2s + 2t - 2\) edges and \(s + t + 1\) vertices is presented in Figure~\ref{im2}. 
This graph consists of two disjoint paths \(P_a = (a_0, \ldots, a_{t-1})\) and \(P_b = (b_0, \ldots, b_{s-2})\) and two adjacent vertices \(a, b\), 
where \(a\) is adjacent to every vertex of \(P_a\), \(b\) is adjacent to every vertex of \(P_b\) and to \(a_{t-1}\).

\begin{figure}[h!]
\centering
\tikzset{
  v/.style    = {circle, draw, fill=blue!60, inner sep=1.6pt},
  e/.style    = {line width=0.8pt},
  gone/.style = {line width=0.8pt, dashed},
  dead/.style = {circle, draw, fill=red!75, inner sep=1.8pt}
}

% ---------- параметры ----------
\def\GraphScale{0.99}
\def\XStep{4.0cm}   % горизонтальный шаг
%\def\RowGap{7.0cm}  % вертикальный зазор
%\def\s{5}           % s=5 => a1..a4, b1..b3

% ===== контроль рисования рёбер =====
\def\SkipList{} % сюда копим удалённые рёбра
\newcommand{\Edge}[1]{%
  \IfSubStr{\SkipList}{#1}{}{ \draw[e] #1; }%
}
\newcommand{\DashList}[1]{%
  \begingroup\edef\tmp{#1}\ifx\tmp\empty\else
    \foreach \E in {#1}{\draw[gone]\E;}
  \fi\endgroup
}

% ---------- исходный граф ----------
\newcommand{\DrawGraph}{%
  % путь P1: a1..a4
  \foreach \i [evaluate=\i as \yy using 2.6-0.6*\i] in {0,1,2,3}{
    \node (A\i) at (-2,\yy) [v] {};
    \node[left=2pt] at (A\i) { $a_{\ifnum\i=3 t-1\else\i\fi}$};
  }
  \Edge{(A0)--(A1)} \Edge{(A1)--(A2)} \Edge{(A2)--(A3)}

  % путь P2: b1..b3
  \foreach \j [evaluate=\j as \yy using -1.2-0.6*\j] in {0,1,2}{
    \node (B\j) at (-2,\yy) [v] {};
    \node[left=2pt] at (B\j) { $b_{\ifnum\j=2 s-2\else\j\fi}$};
  }
  \Edge{(B0)--(B1)} \Edge{(B1)--(B2)}

  % C4-трапеция (основания ab и cd вертикальны и параллельны)
  \node (a) at (-1.1,  1.00) [v] {};
  \node (b) at (-1.1, -2.00) [v] {};
  \node[above right=1pt]  at (a) {$a$};
  \node[below right=1pt]  at (b) {$b$};

  % рёбра C4
  \Edge{(a)--(b)} \Edge{(A3)--(b)}

  % соединения
  \Edge{(a)--(A0)} \Edge{(a)--(A1)} \Edge{(a)--(A2)} \Edge{(a)--(A3)}
  \Edge{(b)--(B0)} \Edge{(b)--(B1)} \Edge{(b)--(B2)}
}
\def\LabelX{1.65}\def\LabelY{-4.8}
% один кадр: (x, y, красная вершина, текущие удаляемые рёбра, ранее удалённые)
\newcommand{\Frame}[5]{%
  \begin{scope}[xshift=#1, yshift=#2]
    \begingroup
      % список рёбер, которые нужно пропустить (уже удалены или удаляются сейчас)
      \edef\SkipList{#5, #4}%
      % рисуем граф без этих рёбер
      \DrawGraph
      % пунктиром — только удаляемые на этом шаге рёбра
      \DashList{#4}%
      % проверка на пустой аргумент с красной вершиной
      \if\relax\detokenize{#3}\relax
        % пусто — пропускаем
      \else
        \node[dead] at (#3) {};
      \fi
    \endgroup
  \end{scope}
}

\def\LabelX{-2.0}\def\LabelY{-3.8}
\newcommand{\Under}[3]{%
  \begin{scope}[xshift=#1, yshift=#2]
    \node at (\LabelX,\LabelY) { #3};
  \end{scope}
}
% ---- сюда вставляешь весь твой tikzpicture ----
%\begin{tikzpicture}[scale=\GraphScale, transform shape]
\begin{tikzpicture}[scale=0.9, transform shape]

% ===== шаги (по твоему тексту) =====
% 1) close d, erase ab
\def\DOne{(a)--(b)}                         \def\AccOne{(a)--(b),(A3)--(b)}
% 2) close a, erase cd
\def\DTwo{}                         
\def\AccTwo{(a)--(b), (A3)--(b),(A0)--(A1),(A1)--(A2),(A2)--(A3)}
% 3) close a, erase bc
\def\DThree{}                       
\def\AccThree{\AccTwo, (B0)--(B1),(B1)--(B2),(b)--(B0),(b)--(B1),(b)--(B2),(a)--(A0),(a)--(A1),(a)--(A2),(a)--(A3)}
% 4) close a1, erase ad

% ===== ряд 1: исходник + шаги 1–4 =====

\Frame{0*\XStep}{0cm}{}{}{}              % исходный граф
\Under{0*\XStep}{0cm}{Original graph}   
%\Frame{1*\XStep}{0cm}{}{\DOne}{}   % шаг 1
%\Under{1*\XStep}{0cm}{Step 1}
\Frame{1*\XStep}{0cm}{}{\DTwo}{\AccOne}    % шаг 2
\Under{1*\XStep}{0cm}{Step 1}
\Frame{2*\XStep}{0cm}{}{\DThree}{\AccTwo}  % шаг 3
\Under{2*\XStep}{0cm}{Step 2}
\Frame{3*\XStep}{0cm}{}{}{\AccThree}  % шаг 3
\Under{3*\XStep}{0cm}{Step 3}
\end{tikzpicture}
\caption{The example of an erasable graph with \(2s + 2t - 2\) edges and \(s + t + 1\) vertices and its erasing process for the case \(\gcd(s, t) = 1\). }
\label{im2}
\end{figure}

%\begin{figure}[h]
%\includegraphics[width=0.8 \linewidth]{Picture_14.jpg}
%\caption{A graph with \( 2s + 2t - 2 \) edges when \( \gcd(s, t) = 1 \).}
%\label{im2}
%\end{figure}

Figure~\ref{im2} also illustrates the algorithm for erasing all edges in the graph. In Step~1, we  
1) close the vertex \(a\) and delete the edge \(\{a_{t-1}, b\}\), and  
2) close the vertex \(a_0\) and delete the edge \(\{a, b\}\). 
In Step~2, we erase all edges of the path \(P_a\) as follows: first, we close \(a\) and erase the edge \(\{a_{s-1}, a_s\}\), and then, in a similar way, we erase one by one the edges \(\{a_{h\cdot s - 1}, a_{h\cdot s}\}\) for \(h \geq 2\), where the product and the sum operations are taken in \(\mathbb{Z}_t\) considered as a left \(\mathbb{Z}\)-module. Since \(\gcd(s,t) = 1\), we eventually erase all edges of \(P_a\) in Step~2.  
In Step~3, we close \(b\) and erase, one by one, the edges \(\{a_0, a\}, \{a_1, a\}, \ldots, \{a_{t-1}, a\}\).  
In Step~4, for every remaining edge \(\{u, v\}\) with \(u, v \in \{b, b_0, \ldots, b_{s-2}\}\) and \(u\) having degree at most \(2\) in the remaining graph, we close either \(a\) or the other neighbour of \(u\) (if it exists) and erase this edge via the partition with one part \(\{a_0, \ldots, a_{t-2}, u\}\), say.  
Thus, all edges are eventually erased.

\subsubsection{\( \gcd(s, t) > 1 \)}

The example of an erasable graph with \(2s + 2t - 3\) edges and \(s + t + 1\) vertices is presented in Figure~\ref{im3}. 
It is obtained from the graph in Figure~\ref{im2} by deleting the edge \(\{a, a_{t-1}\}\).

\begin{figure}[h!]
\centering
\tikzset{
  v/.style    = {circle, draw, fill=blue!60, inner sep=1.6pt},
  e/.style    = {line width=0.8pt},
  gone/.style = {line width=0.8pt, dashed},
  dead/.style = {circle, draw, fill=red!75, inner sep=1.8pt}
}

% ---------- параметры ----------
\def\GraphScale{0.99}
\def\XStep{3.6cm}   % горизонтальный шаг
%\def\RowGap{7.0cm}  % вертикальный зазор
%\def\s{5}           % s=5 => a1..a4, b1..b3

% ===== контроль рисования рёбер =====
\def\SkipList{} % сюда копим удалённые рёбра
\newcommand{\Edge}[1]{%
  \IfSubStr{\SkipList}{#1}{}{ \draw[e] #1; }%
}
\newcommand{\DashList}[1]{%
  \begingroup\edef\tmp{#1}\ifx\tmp\empty\else
    \foreach \E in {#1}{\draw[gone]\E;}
  \fi\endgroup
}

% ---------- исходный граф ----------
\newcommand{\DrawGraph}{%
  % путь P1: a1..a4
  \foreach \i [evaluate=\i as \yy using 2.6-0.5*\i] in {0,1,2,3,4,5,6,7,8}{
    \node (A\i) at (-2,\yy) [v] {};
    \node[left=2pt] at (A\i) { $a_{\ifnum\i=8 t-1\else\i\fi}$};
  }
  \Edge{(A0)--(A1)} \Edge{(A1)--(A2)} \Edge{(A2)--(A3)}
  \Edge{(A3)--(A4)} \Edge{(A4)--(A5)} \Edge{(A5)--(A6)}\Edge{(A6)--(A7)} \Edge{(A7)--(A8)}

  % путь P2: b1..b3
  \foreach \j [evaluate=\j as \yy using -2.0-0.5*\j] in {0,1}{
    \node (B\j) at (-2,\yy) [v] {};
    \node[left=2pt] at (B\j) { $b_{\ifnum\j=1 s-2\else\j\fi}$};
  }
  \Edge{(B0)--(B1)} 

  % C4-трапеция (основания ab и cd вертикальны и параллельны)
  \node (a) at (-0.7,  0.60) [v] {};
  \node (b) at (-0.7, -2.70) [v] {};
  \node[above right=1pt]  at (a) {$a$};
  \node[below right=1pt]  at (b) {$b$};

  \Edge{(a)--(b)} \Edge{(b)--(A8)} 

  % соединения
  \Edge{(a)--(A0)} \Edge{(a)--(A1)} \Edge{(a)--(A2)} \Edge{(a)--(A3)} \Edge{(a)--(A4)} \Edge{(a)--(A5)} \Edge{(a)--(A6)} \Edge{(a)--(A7)} 
  \Edge{(b)--(B0)} \Edge{(b)--(B1)} 
}
\def\LabelX{1.65}\def\LabelY{-3.8}
% один кадр: (x, y, красная вершина, текущие удаляемые рёбра, ранее удалённые)
\newcommand{\Frame}[5]{%
  \begin{scope}[xshift=#1, yshift=#2]
    \begingroup
      % список рёбер, которые нужно пропустить (уже удалены или удаляются сейчас)
      \edef\SkipList{#5, #4}%
      % рисуем граф без этих рёбер
      \DrawGraph
      % пунктиром — только удаляемые на этом шаге рёбра
      \DashList{#4}%
      % проверка на пустой аргумент с красной вершиной
      \if\relax\detokenize{#3}\relax
        % пусто — пропускаем
      \else
        \node[dead] at (#3) {};
      \fi
    \endgroup
  \end{scope}
}

\def\LabelX{-1.2}\def\LabelY{-3.8}
\newcommand{\Under}[3]{%
  \begin{scope}[xshift=#1, yshift=#2]
    \node at (\LabelX,\LabelY) { #3};
  \end{scope}
}
% ---- сюда вставляешь весь твой tikzpicture ----
%\begin{tikzpicture}[scale=\GraphScale, transform shape]
\begin{tikzpicture}[scale=0.9, transform shape]

% ===== шаги (по твоему тексту) =====
% 1) close d, erase ab
\def\DOne{(a)--(b)}                         \def\AccOne{(a)--(b),(b)--(A8)}
% 2) close a, erase cd
\def\DTwo{}                         
\def\AccTwo{(a)--(b), (A5)--(A6),(A2)--(A3),(b)--(A8)}
% 3) close a, erase bc
\def\AccThree{(a)--(b), (A5)--(A6),(A2)--(A3),(b)--(A8),(A7)--(A8)}                      
\def\AccFour{\AccTwo, (B0)--(B1),(B1)--(B2),(b)--(B0),(b)--(B1),(b)--(B2),(a)--(A0),(a)--(A1),(a)--(A2),(a)--(A3), (a)--(A4),(a)--(A5),(a)--(A6),(a)--(A7), (A0)--(A1),(A1)--(A2),(A3)--(A4), (A6)--(A7),(A7)--(A8), (A4)--(A5)}
% 4) close a1, erase ad

% ===== ряд 1: исходник + шаги 1–4 =====

\Frame{0*\XStep}{0cm}{}{}{}              % исходный граф
\Under{0*\XStep}{0cm}{Original graph}   
%\Frame{1*\XStep}{0cm}{}{\DOne}{}   % шаг 1
%\Under{1*\XStep}{0cm}{Step 1}
\Frame{1*\XStep}{0cm}{}{\DTwo}{\AccOne}    % шаг 2
\Under{1*\XStep}{0cm}{Step 1}
\Frame{2*\XStep}{0cm}{}{}{\AccTwo}  % шаг 3
\Under{2*\XStep}{0cm}{Step 2}
\Frame{3*\XStep}{0cm}{}{}{\AccThree}  % шаг 3
\Under{3*\XStep}{0cm}{Step 3}
\Frame{4*\XStep}{0cm}{}{}{\AccFour}  % шаг 3
\Under{4*\XStep}{0cm}{Steps 4-6}
\end{tikzpicture}
\caption{The example of an erasable graph with \(2s + 2t - 3\) edges and \(s + t + 1\) vertices and its erasing process for the case \(\gcd(s, t) \neq 1\).}
\label{im3}
\end{figure}

%\begin{figure}[h]
%\includegraphics[width=1.0 \linewidth]{Picture_15.jpg}
%\caption{A graph with \(2s+2t-3\) edges when \(gcd(s,t)\neq 1\).}
%\label{im3}
%\end{figure}

Let us show that this graph is erasable, see Figure~\ref{im3}.
In the same way as in the previous case, in Step~1, we erase the edges $\{a_{t-1},b\}$ and $\{a,b\}$.
Then, similarly, in Step~2, we erase all edges $\{a_{h\cdot s-1},a_{h\cdot s}\}$, $h\geq 1$.
It leaves disjoint $\gcd(s,t)$-paths.
In Step~3, we close $b$ and delete the edge $\{a_{t-2},a_{t-1}\}$ --- the only edge between $\{a,a_0,\ldots,a_{t-2}\}$ and $\{a_{t-1},b_0,\ldots,b_{s-2}\}$.
In Step~4, for every $j=0,1,\ldots,t-2$, if the edge $\{a_j,a_{j+1}\}$ has not been erased, we close $a$ and erase this edge --- the only edge between $\{a_{j+1},\ldots,a_{j+(s-1)-r},a_{t-1},\ldots,a_{t-1+r}\}$ and the remaining part of the graph, where the indices are taken from $\mathbb{Z}_t$ and $r$ is the remainder after dividing $j$ by $\gcd(s,t)$.
In Step~5, we delete edges $\{a_0,a\},\ldots,\{a_{t-2},a\}$ by closing $b$.
Step~6 --- removing all edges in the $b$-part of the graph --- is analogous to the respective step in the case $\gcd(s,t)=1$.

\subsection{Lower bound}

Let \( G \) be an erasable graph on \( s + t + 1 \) vertices. Due to Claim~\ref{claim}, we need to show that $|E(G)|\leq 2s+2t-2$. Consider an erase process of $G$. Due to Corollary~\ref{S(H)}, it suffices to  show that $Q_i\geq 1$ for some $i\geq 1$, assuming $(f_0, c_0) = (1,1)$. Also, due to the discussion after this corollary, we may assume $(f_1, c_1) = (2,1)$, since otherwise $Q_1\geq 1$. In other words, the hyperforest \( H_1 \) is connected and consists of two hyperedges of sizes $s+1$ and $t+1$, and sharing a vertex \( v_1 \). 

We may assume that every next step $i$ has either type $(f_{i-1},c_{i-1})\to (f_{i-1}+1,c_{i-1})$ or type $(f_{i-1},c_{i-1})\to (f_{i-1}-1,c_{i-1}+1)$.
%We then perform the following maximal sequence of steps: at every step $j\geq 2$, close $v_1$ and erase an edge $e_j$, if such an edge exists. We may assume that every step $j$ in this sequence has either type $(f_{j-1},c_{j-1})\to (f_{j-1}+1),c_{j-1})$ or $(f_{j-1},c_{j-1})\to (f_{j-1}-1,c_{j-1}+1)$.
 The latter type is only possible when a hyperedge of size 2 got erased. Assume $j$ is the first step of such type. In particular, $c_{j-1}=1$. 

\begin{claim}
The hypergraph $H_{j-1}$ has one of the following two shapes: either 
\begin{enumerate}
\item it is a hyperstar, i.e. all hyperedges share the same common vertex, or
\item the unique hyperedge of size 2 joins two centers of two hyperstars, and all the other hyperedges have size $s+1$.
\end{enumerate}
\end{claim}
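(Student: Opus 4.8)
The plan is to prove the claim by induction on the type-1 steps, maintaining the invariant that $H_i$ is a hyperstar or is of the second shape for every $i\in\{1,\dots,j-1\}$, and to arrange the chosen erase process so that a type-2 step is executed the first time a hyperedge of size $2$ becomes available; with this convention $H_{j-1}$ is exactly the last hyperforest occurring before such an opportunity. Throughout steps $1,\dots,j-1$ the increment is $(f_{i-1},c_{i-1})\to(f_{i-1}+1,c_{i-1})$, so $H_i$ stays connected, still covers all $s+t+1$ vertices, and $\sum_{F\in E(H_i)}(|F|-1)=s+t$; this identity will pin down hyperedge sizes. The base case is $H_1$, a hyperstar centred at $v_1$ with hyperedges of sizes $s+1$ and $t+1$.

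The first ingredient is an exact description of a type-1 step. If the erased edge $e_i$ and the closed vertex $v_i$ belonged to different hyperedges of $H_{i-1}$, then --- since every edge of $G_{i-1}$ sits inside a hyperedge and $v_i$ is the vertex omitted from the witness partition $(V_i^1,V_i^2,v_i)$ --- the edge $e_i$ would be the unique $G_{i-1}$-edge between $V_i^1$ and $V_i^2$, hence a bridge of $G_{i-1}[F_e]$; the edge-operation would then split $F_e$ and raise the number of components, contradicting the type. So $e_i$ and $v_i$ lie in a common hyperedge $F$; for the step to have the correct increment one needs $G_{i-1}[F]\setminus e_i$ connected, $v_i$ a cut-vertex of it separating it into exactly two pieces $F_1,F_2$, and $v_i$ not a cut-vertex of $G_{i-1}[F']$ for any other hyperedge $F'\ni v_i$. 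Hence $H_i$ is obtained from $H_{i-1}$ by replacing $F$ with $F_1\cup\{v_i\}$ and $F_2\cup\{v_i\}$, leaving all other hyperedges untouched.

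The second ingredient is a size constraint forced by the witness partition: $G_{i-1}\setminus v_i\setminus e_i$ must be disconnected, and the sizes of its connected components must contain a subset summing to exactly $s$ (the complement summing to $t$). Reading off the components of $G_{i-1}\setminus v_i$ from the branches at $v_i$ in the hypertree $H_{i-1}$, and combining with $\sum(|F|-1)=s+t$ and with the fact that erasability makes a valid step available for every non-empty $G_{i-1}$ (so this subset-sum must remain solvable at every stage up to $j-1$), I would split the induction into two cases. If $v_i$ is a centre of the hyperstar (resp.\ one of the two centres of the second shape), the local replacement above visibly keeps $H_i$ a hyperstar (resp.\ of the second shape). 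If $v_i$ is not a centre, then $v_i$ lies in a single hyperedge $F=C_{\ell_0}$, and the partition constraint forces one of the two pieces to have size exactly $s$, i.e.\ $C_{\ell_0}$ is split into a hyperedge of size $s+1$ and a hyperedge of size $|C_{\ell_0}|-s\ge 2$ hanging off it at $v_i$; demanding that this step and every subsequent one still admits a valid partition forces $|C_{\ell_0}|-s=2$ and all remaining hyperedges to have size $s+1$, which is exactly the second shape (and, one checks, this can occur only for exceptional pairs $(s,t)$). Since a type-2 step is taken as soon as a size-$2$ hyperedge appears, this yields the dichotomy for $H_{j-1}$.

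The main obstacle is the non-central case: one must carry out the subset-sum/connectivity bookkeeping carefully enough to rule out ``caterpillar-like'' hypertrees with an internal hyperedge of size larger than $2$, by showing that from such a hypertree an intact branch of size $t+1>t$ becomes unavoidable at the very next step, so no valid partition exists --- contradicting erasability. A minor additional case is the degenerate possibility that $v_1$ is already a cut-vertex of $G_1[A]$ or $G_1[B]$ for the two hyperedges $A,B$ of $H_1$, in which case one can immediately take a step with $Q_i\ge 1$ (by Lemma~\ref{events} and Corollary~\ref{S(H)}) and the whole lower bound follows at once.
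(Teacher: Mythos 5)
Your setup is sound and matches the paper's strategy: induct on the steps before $j$, use the fact that in a type-1 step the erased edge $e_i$ and the closed vertex $v_i$ must lie in a common hyperedge $F$ (your bridge argument for this is correct), and use the witness partition to force one of the two pieces of the split to have size exactly $s$. However, the part you yourself flag as ``the main obstacle'' is precisely where the proposal breaks down, and the fix you propose does not work. When the closed vertex lies in the interior of the joining hyperedge of a second-shape hypergraph and both centres end up on the same side of the cut, the central hyperedge acquires a third attachment point and you obtain the caterpillar-like hypertree you mention. Your plan is to kill this configuration by showing that ``an intact branch of size $t+1>t$ becomes unavoidable at the very next step, so no valid partition exists.'' This is false: such a caterpillar can support further valid type-1 steps --- for instance, closing an attachment point $c$ and erasing an edge of the central hyperedge splits the central hyperedge into two pieces whose sizes, together with the size-$s$ set hanging at $c$ and the other leaves, can be assembled into parts of sizes $s$ and $t$ when $t$ is large relative to $s$; the arithmetic obstruction you are counting on simply is not there. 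The paper takes the opposite route: it shows the caterpillar structure, once created, is \emph{preserved} by all subsequent steps, and derives the contradiction only at step $j$, when a hyperedge of size $2$ must be erased --- which is impossible because the central hyperedge of a caterpillar has at least $3$ vertices and a leaf erasable at that step would have to have size $t+1>2$. Without an argument of this kind the caterpillar case remains unresolved, and it is the heart of the claim.

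A secondary inaccuracy: your inductive invariant appears to pin the joining hyperedge to size $|C_{\ell_0}|-s=2$ at every non-central step. That is too strong. For $(s,t)=(3,7)$, say, the size-$8$ hyperedge of $H_1$ first splits off a size-$4$ leaf, leaving a joining hyperedge of size $5$, and only a later step reduces it to $2$. The paper handles this by proving the dichotomy with the joining hyperedge of \emph{arbitrary} size for all $i\le j-1$, and only at the very end pins its size to $2$ in $H_{j-1}$, using that step $j$ erases a size-$2$ hyperedge while every leaf has size $s+1>2$. You should restate your invariant in this relaxed form; otherwise the inductive step from the hyperstar already fails at $i=2$.
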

 
\begin{proof}
We prove that actually the same holds for all hypergraphs $H_i$, $i\leq j-1$, by induction over $i$, where in the second case the hyperedge joining two centers has arbitrary size. Assume $i\leq j-2$, and $H_i$ has the first shape, i.e. it is a hyperstar with center $v_1$. If $v_{i+1}=v_1$, then it remains a hyperstar. Otherwise, $v_{i+1}$ and $e_{i+1}$ belong to the same hyperedge $F$, and by the definition of the erase procedure, the hyperedge $F$ splits into two hyperedges, where one of the hyperedges has size $s+1$. Moreover, if this hyperedge contains $v_1$, then the second hyperedge also has size $s+1$. It means that $H_{i+1}$ has the second shape.

Now, let us assume that $H_i$ has the second shape. Then, by the definition of the erase procedure and since $Q_{i+1}=0$, the edge $F$ joining two centers have size more than $s+1$, the edge $e_{i+1}$ and the vertex $v_{i+1}$ belong to $F$. The edge-operation and the vertex-operation split $F$ into two hyperedges $V_1,V_2$, where $V_1$ contains both centers and $V_2$ has size $s+1$. If $v_{i+1}$ equals one of the two centers, we are done. Otherwise, we get a hypergraph of a different shape, where all hyperedges, other than $F$, have one common vertex with $F$, and the number of vertices that $F$ shares with other hyperedges is more than 2. It is easy to see by induction that all subsequent erase procedures preserve this structure. In particular, eventually we get that either the ``central'' hyperedge has size 2 or a ``leaf'' hyperedge has size 2. The first option is impossible since the central hyperedge has to contain at least 3 vertices. The last option is also impossible since the $(j-1)$-th erase procedure should separate a set of size exactly $t$ by ``closing'' the vertex $v_{j-1}$, thus the $(j-1)$-th ``hyperleaf'' should have size exactly $t+1>2$.
\end{proof}

Assume $H_{j-1}$ has the second shape. We may assume that $Q_j=0$. Then $H_j$ consists of two connected components --- an isolated hyperedge of size $s+1$ and a hyperstar of size $t$. It is easy to see that $e_{j+1}$ and $v_{j+1}$ belong to different hyperedges, giving $Q_{j+1}\geq 1$.

%Moreover, this new hyperedge has a unique vertex that belongs to some other hyperedges of the hypertree, implying that each hyperedge $F_2,\ldots,F_{i+1}$ of size $t+1$ in $H_i$ has this property. Therefore, $H_{j-1}$ is a hypertree with all but one hyperedges of size $t+1$ (with this property) and one hyperedge of size $2$. We call such hypertree {\it amenable}. Since $G$ is erasable, an amenable hypertree is always reachable.

%Let us note that the amenable hypertree $H_{j-1}$ has one of the following two shapes: either it is a hyperstar (i.e. all hyperedges share the same common vertex), or the hyperedge $\{u,v\}$ of size 2 joins two centers of two hyperstars. Assume $H_{j-1}$ has the second shape. First of all, it means that all 

%Assume the next erased edge is not $\{u,v\}$. If this edge is a bridge of some hyperedge, then $Q_j\geq 1$. Therefore, the edge-operation and the vertex-operation are applied to the same hyperedge. It is easy to see that in this case the erase procedure is only possible when $H_j$ has the second type --- union of two hyperstars.

%If $H_{j-1}$ has the second type, the edge-operation is applied to $\{u,v\}$, and $Q_j=0$, then $H_j$ consists of two connected components --- an isolated hyperedge of size $t+1$ and a hyperstar. It is easy to see that $e_{t+1}$ and $v_{t+1}$ belong to different hyperedges, giving $Q_{j+1}\geq 1$.

It remains to consider the case when $H_{j-1}$ is a hyperstar. The closed vertex $v_j$ does not belong to $e_j$, meaning that it is into the `interior' of some other hyperedge $F$. Then, closing this vertex and erasing $e_j$ leaves a connected component of size at least $t$. Since its size should be equal to $t$, the size of $F$ is $s+1$, and closing $v_j$ splits it into at least two hyperedges, that is $f_j\geq f_{j-1}$ and $c_j=c_{j-1}+1$. Therefore, $Q_j\geq 1$, completing the proof.

\section{Proof of Theorem~\ref{result_3}}
\label{sc:3_proof}

\subsection{Upper bound}

Here we construct a weakly $K_{s,t}$-saturated graph $G$ on $s+t+j$ vertices with 
$$
 {s+t+j\choose 2}-j(s+t-2)-(2t-3)=(st-1) + \binom{s}{2} + \binom{t-2-j}{2} + j(t-1) + 1
$$
edges, which is sufficient for the upper bound on $\mathrm{wsat}(s+t+j,K_{s,t})$ from Theorem~\ref{result_3}. The graph $G$ is presented on Figure \ref{example.jpg}. It consists of three sets of vertices $A=\{a_1,\ldots,a_s\}$, $B=\{b_1,\ldots,b_t\}$, $C=\{c_1,\ldots,c_j\}$ and the following edges:
\begin{itemize}
\item all edges between vertices in $A$, i.e. $G[A]\cong K_s$,
\item all edges between $b_i,b_{i'}$, where $j+3\leq i<i'\leq t$,
\item all edges between $A$ and $B$ except for the edge $\{a_1,b_1\}$,
\item all edges between $B\setminus\{b_1\}$ and $C$,
\item the edge $\{b_1,b_{j+3}\}$.
\end{itemize}

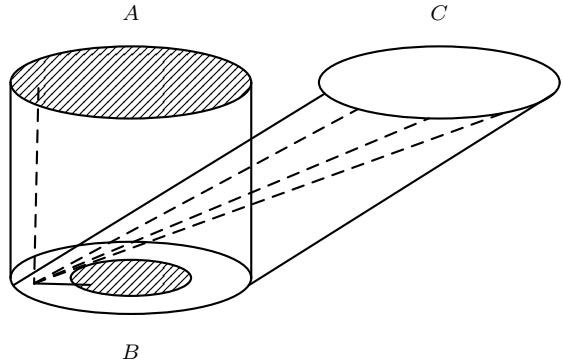
\begin{figure}[h]
\centering
\begin{tikzpicture}[line join=round, line cap=round, scale=1, line width=0.8pt]

  % ===== ПАРАМЕТРЫ =====
  \newcommand{\NA}{6}   % |A| = s
  \newcommand{\JC}{3}   % |C| = j
  \pgfmathtruncatemacro{\JJT}{\JC+3}   % индекс j+3
  \pgfmathtruncatemacro{\TB}{\JJT+2}   % t = j+5
  \pgfmathtruncatemacro{\Bfront}{\JC+2} % b_1..b_{j+2} на овале

  \def\a{1.6}   % полуось эллипсов по x
  \def\b{0.48}  % полуось эллипсов по y
  \def\h{2.6}   % высота
  \pgfmathsetmacro{\dx}{2*\a + 0.9} % сдвиг C вправо

  % Внутренний «круг» (эллипс) в основании B
  \def\rinscale{0.50}
  \pgfmathsetmacro{\rinA}{\rinscale*\a}
  \pgfmathsetmacro{\rinB}{\rinscale*\b}

  % Углы (вершины не рисуем)
  \def\phiA{210}
  \def\phiB{270}
  \def\phiC{330}

  % Уводим a1, b1 внутрь + вверх
  \def\inset{0.90}
  \def\dyA{+0.15}
  \def\dyB{+0.12}

  % Точки касания наклонного цилиндра внутри эллипсов
  \def\phiTop{18}   % верхний эллипс C
  \def\phiBase{12}  % основание B

  % Реже пунктир для всех dashed-линий
  \tikzset{
    dashloose/.style={dash pattern=on 4.5pt off 3.5pt},
    missing/.style={dashloose},  % отсутствующие рёбра
    hidden/.style={dashloose},   % задняя образующая наклонного цилиндра
  }

  % ===== Центры =====
  \coordinate (O) at (0,0);        % B
  \coordinate (T) at (0,\h);       % A
  \coordinate (U) at (\dx,\h);     % C

  % ===== ШТРИХОВКИ (сначала) =====
  \path[pattern=north east lines, pattern color=black]
    (T) ellipse [x radius=\a, y radius=\b];            % основание A
%  \path[pattern=north east lines, pattern color=black]
%    (U) ellipse [x radius=\a, y radius=\b];            % верхнее основание C
  \path[pattern=north east lines, pattern color=black]
    (O) ellipse [x radius=\rinA, y radius=\rinB];       % малый круг в B

  % ===== Контуры основания B и внутреннего эллипса =====
  \draw (O) ++(0:\a)
    arc[start angle=0,  end angle=180, x radius=\a, y radius=\b];
  \draw (O) ++(180:\a)
    arc[start angle=180,end angle=360, x radius=\a, y radius=\b];

  \draw (O) ++(0:\rinA)
    arc[start angle=0,  end angle=180, x radius=\rinA, y radius=\rinB];
  \draw (O) ++(180:\rinA)
    arc[start angle=180,end angle=360, x radius=\rinA, y radius=\rinB];

  % ===== Координаты (без рисования узлов) =====
  % A_i
  \foreach \i in {1,...,\NA}{
    \pgfmathsetmacro{\ang}{190 + (\i)*(150/(\NA+1))}
    \coordinate (A\i) at ($(T)+({\a*cos(\ang)},{\b*sin(\ang)})$);
  }
  % B_k на внешнем овале
  \foreach \k in {1,...,\Bfront}{
    \pgfmathsetmacro{\ang}{190 + (\k)*(150/(\TB+1))}
    \coordinate (B\k) at ($(O)+({\a*cos(\ang)},{\b*sin(\ang)})$);
  }
  % B_{j+3..j+5} внутри малого круга
  \pgfmathtruncatemacro{\kA}{\JJT}
  \pgfmathtruncatemacro{\kB}{\JJT+1}
  \pgfmathtruncatemacro{\kC}{\JJT+2}
  \coordinate (B\kA) at ($(O)+({\rinA*0.78*cos(\phiA)},{\rinB*0.78*sin(\phiA)})$);
  \coordinate (B\kB) at ($(O)+({\rinA*0.78*cos(\phiB)},{\rinB*0.78*sin(\phiB)})$);
  \coordinate (B\kC) at ($(O)+({\rinA*0.78*cos(\phiC)},{\rinB*0.78*sin(\phiC)})$);
  % C_m
  \foreach \m in {1,...,\JC}{
    \pgfmathsetmacro{\ang}{190 + (\m)*(150/(\JC+1))}
    \coordinate (C\m) at ($(U)+({\a*cos(\ang)},{\b*sin(\ang)})$);
  }

  % === УВОДИМ A1 и B1 внутрь и СДВИГАЕМ ВВЕРХ ===
  \coordinate (A1) at ($(T)!\inset!(A1)$);
  \coordinate (A1) at ($(A1)+(0,\dyA)$);
  \coordinate (B1) at ($(O)!\inset!(B1)$);
  \coordinate (B1) at ($(B1)+(0,\dyB)$);

  % ===== Боковые образующие =====
  % Прямой цилиндр
  \draw (-\a,0) -- (-\a,\h) (\a,0) -- (\a,\h);

  % Наклонный цилиндр: касания не на краях
  \coordinate (CtL) at ($(U)+({\a*cos(180+\phiTop)},{\b*sin(180+\phiTop)})$);
  \coordinate (CtR) at ($(U)+({\a*cos(360-\phiTop)},{\b*sin(360-\phiTop)})$);
  \coordinate (CbL) at ($(O)+({\a*cos(180+\phiBase)},{\b*sin(180+\phiBase)})$);
  \coordinate (CbR) at ($(O)+({\a*cos(360-\phiBase)},{\b*sin(360-\phiBase)})$);
  \draw (CbR) -- (CtR);  % задняя 
  \draw         (CbL) -- (CtL);  % передняя — сплошная

  % ===== Только нужные рёбра =====
  \draw[missing] (A1)--(B1);                 % отсутствует {a1,b1}
  \foreach \m in {1,...,\JC}{ \draw[missing] (B1)--(C\m); } % отсутствуют b1—C_m
  \draw (B1)--(B\JJT);                        % {b1,b_{j+3}} — сплошная

  % ===== ВЕРХНИЕ ОВАЛЫ A и C (в конце) =====
  \draw (U) ++(0:\a)
    arc[start angle=0,  end angle=180, x radius=\a, y radius=\b];
  \draw (U) ++(180:\a)
    arc[start angle=180,end angle=360, x radius=\a, y radius=\b];
  \draw (T) ++(0:\a)
    arc[start angle=0,  end angle=180, x radius=\a, y radius=\b];
  \draw (T) ++(180:\a)
    arc[start angle=180,end angle=360, x radius=\a, y radius=\b];

  % ===== Подписи (A и C над, B под) =====
  \node[font=\scriptsize, anchor=south] at ($(T)+(0,\b+0.20)$) {$A$};
  \node[font=\scriptsize, anchor=south] at ($(U)+(0,\b+0.20)$) {$C$};
  \node[font=\scriptsize, anchor=north] at ($(O)+(0,-\b-0.25)$) {$B$};

\end{tikzpicture}
\caption{The weakly $K_{s,t}$-saturated graph $G$: $A$ and the subset $\{b_{j+3},\ldots,b_t\}$ of $B$ induce cliques, all edges between sets $A,B$ and between sets $B,C$ are drawn except for the edges represented by dashed lines, the edge $\{b_1,b_{j+3}\}$ is represented by a solid line.}
\label{example.jpg}
\end{figure}

It is easy to see that $G$ has the required number of edges. Let us show that $G$ is weakly $K_{s,t}$-saturated, i.e. that the missing edges can be added one by one, each time creating a copy of $K_{s,t}$:
\begin{itemize}
    \item add the only missing edge $\{a_1,b_1\}$ between $A$ and $B$;
    %\item create \( A_1A_2 \) by fixing \( A_1, B_1A_3, \ldots, A_s \) and \( A_2B_2, \ldots, B_s, \)
    \item for every $i\in[j]$, add the edge $\{b_1,c_i\}$ and create a copy of $K_{s,t}$ with parts $\{c_i, a_2, \ldots, a_s\}$ and $B$;
    \item add every edge $\{a_i,c_{i'}\}$ between sets $A$ and $C$ and create a copy of $K_{s,t}$ with parts $\{c_{i'}, a_1, \ldots, a_{i-1},a_{i+1}, \ldots, a_s\}$ and $\{a_i, b_1, \ldots, b_{t-1}\}$;
    \item add every edge $\{c_i,c_{i'}\}$ inside $C$ --- it creates a copy of $K_{s,t}$ with parts $\{c_i, a_1, \ldots, a_{s-1}\}$ and $\{c_{i'},b_1, \ldots, b_{t-1}\}$;
    \item for every $i\in[j+1]$, add the edge $\{b_{1+i},b_{j+3}\}$ --- it creates a copy of $K_{s,t}$ with the $t$-part $\{a_1, \ldots, a_{j+1}, b_{j+4}, \ldots, b_t, b_1, b_{1+i}\}$ and the $s$-part $\{c_1, \ldots, c_j, a_{j+2}, \ldots, a_s, b_{j+3}\}$ when $j\leq s-2$ and $\{c_1, \ldots, c_{s-1}, b_{j+3}\}$ when $j\geq s-1$; 
%    first parts $\{c_1, \ldots, c_j, a_{j+2}, \ldots, a_s, b_{j+3}\}$ and $\{a_1, \ldots, a_{j+1}, b_{j+4}, \ldots, b_t, b_1, b_{1+i}\}$, [Do we require here that $j\leq s-1$ or something like that?]
    \item for every $1\leq i<j+3<i'\leq t$, add the edge $\{b_i,b_{i'}\}$ and create a copy of $K_{s,t}$ with parts $\{b_i', a_1, \ldots, a_{s-1}\}$  and $\{b_i, c_1,\ldots,c_j, b_1, a_s, b_{j+3}, \ldots, b_{i'-1}, b_{i'+1}, \ldots, b_t\}$;
    \item add every missing edge $\{b_i,b_{i'}\}$, where $1\leq i<i'\leq j+2$, --- it creates a copy of $K_{s,t}$ with parts $\{b_i,a_1,\ldots,a_{s-1}\}$ and $\{b_{i'},a_s,c_1,\ldots,c_j,b_{j+3},\ldots,b_t\}$.
\end{itemize}

\subsection{Lower bounds}
\label{sc:2_lower_proof}

Let $j\geq 2$ and $G$ be a graph on at least $j+1$ vertices. Let us assume that there exists an edge $e\in E(G)$ and two disjoint sets $V^1,V^2\subset E(G)$ such that $e$ is the only edge between $V^1$ and $V^2$ and $|V(G)|-|V^1|-|V^2|=j$. We will call {\it $j$-erase procedure} the operation of deletion of $e$ from $G$. The graph $G$ is {\it $j$-erasable}, if the $j$-erase procedure can be applied certain number of times so that the final graph is empty. For technical reasons, we assume that the $j$-erase procedure can be applied to any edge of any graph on $j+1$ vertices.

Let $G$ be a $j$-erasable graph on $n$ vertices with the maximum number of edges, and let $n=s+t+j$. Then, similarly to Claim~\ref{claim}, we get
$$
 |E(G)|\geq{n\choose 2}-\mathrm{wsat}(n,K_{s,t}).
$$
For $j+1\leq k\leq n$, let $f_j(k)$ be the maximum number of edges in an erasable graph on $n$ vertices with $n-k$ isolated vertices.

\subsubsection{Lower bound for $j=2$}

We prove here that, for every $j\geq 2$, $f_j(k)\leq \alpha n-\beta$, where
$$
 \alpha={j+1\choose 2}+1,\quad \beta=\alpha j+1,
$$
which immediately implies the lower bound in Theorem~\ref{result_3} for $j=2$.
 
\begin{claim}
\label{cl:breakdown}
\begin{enumerate}
\item For $k\leq j+2$, $f_j(k)=\binom{k}{2}$.
\item For $k\geq j+3$, $f_j(k)\leq 1+\max_{1+j\leq k_1,k_2\leq k-1:\,k_1+k_2=k+j}(f_j(k_1)+f_j(k_2))$
\end{enumerate}
\end{claim}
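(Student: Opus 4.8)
The plan is to prove both parts of Claim~\ref{cl:breakdown} by analysing the structure of a $j$-erase procedure applied to a $j$-erasable graph on $k$ vertices with $n-k$ isolated vertices (recall that $f_j(k)$ counts edges, and isolated vertices carry no edges, so it suffices to think about the $k$-vertex "core"). For the first part, when $k\le j+2$, the bound $f_j(k)\le\binom{k}{2}$ is trivial since $\binom{k}{2}$ is the total number of edges available; for the matching lower bound, I would observe that by our convention the $j$-erase procedure may be applied to \emph{any} edge of \emph{any} graph on $j+1$ vertices, so $K_{j+1}$ is $j$-erasable, and for $k=j+2$ one checks directly that $K_{j+2}$ is $j$-erasable: any edge $e$ of $K_{j+2}$ has endpoints that can serve as the singleton sets $V^1,V^2$ (each of size $1$) with the remaining $j$ vertices forming the "closed" part, and $e$ is indeed the only edge between $V^1$ and $V^2$; after deleting $e$ one is left with a graph on $j+2$ vertices that still admits such a decomposition for every remaining edge, so we can continue until the graph is empty. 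Hence $f_j(k)=\binom{k}{2}$ for $k\le j+2$.

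For the second part, let $G$ be a $j$-erasable graph on $k\ge j+3$ vertices (plus isolated vertices) achieving $f_j(k)$ edges, and consider the very first step of an erase process: there is an edge $e$, witnessed by disjoint sets $V^1,V^2$ with $e$ the unique edge between them and $|V(G)|-|V^1|-|V^2|=j$, where $V(G)$ here means the $k$ non-isolated vertices together with whatever else — I need to be careful that the "$j$ leftover vertices" are counted among the $k$ core vertices, so set $k_1=|V^1|$, $k_2=|V^2|$, and then $k_1+k_2=k+j$ is \emph{wrong} unless the $j$ closed vertices overlap appropriately; the right reading is $k_1+k_2 = k - j$ if the closed vertices are among the $k$, but the claim states $k_1+k_2=k+j$, so the intended accounting must be that $n - k_1 - k_2 = j$ with $n$ the total vertex count and $n-k$ of them isolated, forcing the isolated vertices to be split between $V^1$ and $V^2$ — I would unwind this bookkeeping carefully so that after deleting $e$, the graph $G\setminus e$ decomposes as a disjoint union $G_1\sqcup G_2$ where $G_i$ lives on $V^i$ (together with its share of isolated vertices) and each $G_i$ is itself $j$-erasable (continuing the same erase process, restricted to each side, never needs an edge crossing between the sides since none exists after removing $e$). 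Then $|E(G)| = 1 + |E(G_1)| + |E(G_2)| \le 1 + f_j(k_1) + f_j(k_2)$, and taking the maximum over the admissible values $1+j\le k_1,k_2\le k-1$ with $k_1+k_2=k+j$ gives the claimed recursion.

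The main obstacle I anticipate is precisely the vertex-counting bookkeeping in part~2: one must pin down exactly how the $j$ "closed" vertices and the $n-k$ isolated vertices are distributed relative to $V^1$ and $V^2$, and verify that the two pieces $G_1$, $G_2$ are genuinely $j$-erasable \emph{as graphs on the appropriate number of vertices} (in particular, each piece inherits enough vertices that the $j$-erase procedure can still be applied — this is where the hypothesis $k_1,k_2\ge j+1$ enters, and why the degenerate small cases are handled separately in part~1). A secondary subtlety is to confirm that the remainder of the original erase process really does restrict to a valid erase process on each side: since after the first deletion there is no edge between $V^1$ and $V^2$, every subsequent erasable edge lies entirely within one side, and its witnessing partition can be intersected with that side's vertex set — one should check that the "$+j$ leftover vertices" condition is preserved under this restriction, which follows because the leftover set for each later step, intersected with the side containing the erased edge, still has the right size (the other side contributes only to the partition classes, not to the leftover count, once it is disconnected). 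Once these structural points are nailed down, the inequality in part~2 is immediate, and together with part~1 the recursion $f_j(k)\le \alpha n - \beta$ follows by an easy induction that I would carry out afterwards.
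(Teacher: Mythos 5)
There is a genuine gap in your part 2, and it sits exactly at the point you flag as the crux. Your proposed resolution of the bookkeeping is wrong: you suggest that $k_1+k_2=k+j$ should come from distributing the $n-k$ isolated vertices between $V^1$ and $V^2$, and that after deleting $e$ the graph decomposes as a disjoint union $G_1\sqcup G_2$ living on $V^1$ and $V^2$. That decomposition misses all edges incident to the $j$ ``leftover'' vertices of $V(G)\setminus(V^1\cup V^2)$, which need not be isolated (in general they carry many edges of $G$), so the count $|E(G)|=1+|E(G_1)|+|E(G_2)|$ undercounts. Moreover your accounting gives $|V^1|+|V^2|=n-j$, which is not $k+j$ in general. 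The correct reading, and the paper's, is that the $j$ leftover vertices are placed in \emph{both} pieces: writing $U$ for the $k$ non-isolated vertices and $U^0:=V(G)\setminus(V^1\cup V^2)$ (which one may assume lies inside $U$, after a harmless swap), set $U^1:=U^0\cup(V^1\cap U)$ and $U^2:=U^0\cup(V^2\cap U)$. Then $|U^1|+|U^2|=2j+(k-j)=k+j$, every edge of $G\setminus e$ lies inside $G[U^1]$ or $G[U^2]$, and $|E(G)|\leq 1+|E(G[U^1])|+|E(G[U^2])|$.

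With this setup your secondary worry also disappears. One does not restrict the witnessing partitions to one side: the graph $G^i$ obtained from $G[U^i]$ by padding with $n-k_i$ isolated vertices lives on the same total number $n$ of vertices, so for any later step of the original process whose erased edge lies entirely inside $U^i$, the \emph{identical} partition $(W^1,W^2)$ still witnesses the $j$-erase procedure in $G^i$ (the leftover count is unchanged, and $G^i$ being a subgraph of $G$ only removes crossing edges). Hence each $G^i$ is $j$-erasable and $|E(G[U^i])|\leq f_j(k_i)$, which is what the recursion needs. Your part 1 is essentially fine (the upper bound is trivial and the lower bound follows from the convention on graphs with $j+1$ non-isolated vertices, plus the singleton partition for $K_{j+2}$ with the isolated vertices absorbed into $V^1$), and in any case only the upper bounds for $k\in\{j+1,j+2\}$ are used downstream.
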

\begin{proof}
The first part is immediate. Let us prove the second part. 
Let $G$ be a $j$-erasable graph on $n$ vertices with $n-k$ isolated vertices and with the maximum number of edges $f_j(k)$. Let $U$ be the set of $k$ non-isolated vertices. Assume the $j$-erase procedure is applied to an edge $e$ of $G$ and a partition $(V^1,V^2)$ witnesses this procedure. Without loss of generality, $U^0:=V(G)\setminus(V^1\cup V^2)$ is entirely inside $U$. Let $U^1:=U^0\cup(V^1\cap U)$ and $U^2:=U^0\cup(V^2\cap U)$ have sizes $k_1$ and $k_2$, respectively. Observe that graphs $G^1$ and $G^2$ obtained from $G[U^1]$ and $G[U^2]$ by adding $n-k_1$ and $n-k_2$ isolated vertices respectively are erasable, completing the proof. Indeed, let $i\in\{1,2\}$. Consider a sequence of $j$-erase procedures applied in $G$ that halts at the empty graph. The edges in this sequence that are entirely inside $U^i$ can also be erased in $G^i$ via exactly the same partition.
\end{proof}

Let us prove $f_j(k)\leq\alpha k-\beta$ by induction over $k\in\{j+1,\ldots,n\}$. For $k\in\{j+1,j+2\}$, Claim~\ref{cl:breakdown} gives
\begin{align*}
 f_j(j+1)&=\frac{(j+1)j}{2}=\alpha-1=\alpha(j+1)-\beta,\\
 f_j(j+2)&=\frac{(j+2)(j+1)}{2}\leq 2\alpha-1=\alpha(j+2)-\beta,
\end{align*}
as needed. Assume $k\geq j+3$ and $f_j(k')\leq\alpha k'-\beta$ for all $k'\in\{j+1,\ldots,k-1\}$. By Claim~\ref{cl:breakdown}, we get that there exist $k_1,k_2\in\{j+1,\ldots,k-1\}$ such that $k_1+k_2=k+j$ and 
$$
 f_j(k)\leq 1+f_j(k_1)+f_j(k_2)=1+\alpha k_1-\beta+\alpha k_2-\beta=1+\alpha(k+j)-\beta-\alpha j-1=\alpha k-\beta,
$$
completing the proof.

\subsubsection{Lower bound for $3\leq j\leq \frac{2}{3}(s+t)-\frac{5}{3}$}

We have to prove that $f_j(k)\leq\frac{19}{12}(j+1)(s+t-1)$. Let $G$ be an $n$-vertex $j$-erasable graph.

\begin{claim}
\label{cl:connectivity_reduction}
The graph $G$ does not have a $(j+2)$-connected subgraph.
\end{claim}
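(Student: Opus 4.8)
The plan is to argue by contradiction: suppose $G$ is a $j$-erasable graph on $n=s+t+j$ vertices that contains a $(j+2)$-connected subgraph. The key structural fact about a $j$-erase procedure is that it deletes an edge $e$ which is the \emph{unique} edge between two disjoint vertex sets $V^1,V^2$ whose union misses exactly $j$ vertices of $G$. The idea is to track what happens to a fixed $(j+2)$-connected subgraph $H$ of $G$ during the sequence of $j$-erase procedures that eventually empties $G$. Since at the end the graph is empty, $H$ cannot survive intact; so there must be a first step at which an edge incident to $V(H)$ (indeed, an edge of the current copy of $H$, or rather of whatever $(j+2)$-connected subgraph is still present on $V(H)$) gets erased. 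I would set up the argument so that $H$ is chosen as a $(j+2)$-connected subgraph that survives as long as possible, and look at the step where it is first broken.

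Concretely, suppose at some step the $j$-erase procedure is applied to an edge $e=\{x,y\}$ with witnessing partition $(V^1,V^2)$, and this is the first step that destroys $(j+2)$-connectivity on $V(H)$. Before this step the induced subgraph on $V(H)$ is still $(j+2)$-connected (it contains a $(j+2)$-connected subgraph, and edges only disappear, so I should phrase this via "contains a $(j+2)$-connected subgraph on a fixed vertex set that is about to be cut"). The set $U^0 := V(G)\setminus(V^1\cup V^2)$ has size exactly $j$. Removing $U^0$ from the current graph disconnects $V^1$ from $V^2$ except through the single edge $e$; hence in the current graph, $U^0\cup\{x\}$ (or $U^0\cup\{y\}$) is a vertex cut of size at most $j+1$ separating $V^1\setminus\{x\}$ from $V^2\setminus\{y\}$. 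If the vertex set of the surviving $(j+2)$-connected subgraph met both $V^1$ and $V^2$ in a vertex outside $\{x,y\}$, this cut of size $\le j+1$ would contradict $(j+2)$-connectivity. So the surviving $(j+2)$-connected subgraph must lie almost entirely on one side: its vertex set is contained in $V^1\cup U^0\cup\{x,y\}$ or in $V^2\cup U^0\cup\{x,y\}$, and in either case the edge $e$ is not really "inside" it in a way that cutting it destroys connectivity — more precisely, deleting $e$ from a graph that is $(j+2)$-connected on $\ge j+3$ vertices cannot disconnect it and cannot drop its connectivity below $j+1$ unless one endpoint had degree exactly $j+1$, but then that endpoint lies in a cut of size $j+1$... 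I would push exactly this case analysis to conclude that the $(j+2)$-connected subgraph actually persists, contradicting maximality (or: no step can ever destroy it, yet the final graph is empty — contradiction).

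I would organize the write-up around the single inequality "$n-|V^1|-|V^2|=j$ forces any $j$-erase step to act as a cut of order at most $j+1$", and then invoke the definition of $(j+2)$-connectivity (every vertex cut has size $\ge j+2$, and the graph has $\ge j+3$ vertices). The cleanest formulation: let $H$ be a $(j+2)$-connected subgraph of $G$ with $V(H)$ fixed; show by induction on the number of $j$-erase steps that the current graph restricted to $V(H)$ still contains a $(j+2)$-connected subgraph on all of $V(H)$, using at each step that the erased edge $e$ together with the $j$ excluded vertices forms a separator of size $\le j+1$, which cannot separate two vertices of a $(j+2)$-connected graph, hence $V(H)$ lies on one side together with at most the two endpoints of $e$, and $e$ is therefore a non-bridge whose removal preserves $(j+2)$-connectivity on $V(H)$ (here I use that a $(j+2)$-connected graph on $\ge j+3$ vertices has minimum degree $\ge j+2$, so deleting one edge leaves minimum degree $\ge j+1$ and, after a short argument, still $(j+2)$-connectivity — or I weaken the claim to "$(j+1)$-connected survives", which is all that is eventually needed to reach a contradiction with the empty final graph). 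The main obstacle I anticipate is the bookkeeping in this inductive step: making precise "contains a $(j+2)$-connected subgraph on the same vertex set" versus "is $(j+2)$-connected", and ruling out the degenerate situation where $V(H)$ is swallowed into the $j$-set $U^0$ (impossible once $|V(H)|\ge j+3$, which holds since $|V(H)|\ge j+3$ is exactly the $(j+2)$-connectivity requirement). Once that is handled, the contradiction "a $(j+2)$-connected subgraph survives forever, but $G$ is emptied" is immediate.
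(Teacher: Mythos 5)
Your central observation is the right one, and it is exactly what the paper's proof exploits: because $|V(G)|-|V^1|-|V^2|=j$ and $e$ is the unique $V^1$--$V^2$ edge, a $j$-erase step behaves like a separation of order at most $j+1$, which is incompatible with a $(j+2)$-connected subgraph. The paper, however, implements this far more directly. It fixes a $(j+2)$-connected subgraph $H$ of the \emph{original} $G$ and considers the \emph{first} edge $e=\{x,y\}$ of $H$ ever erased; at that moment all other edges of $H$ are still present, so $H-e$ still contains $j+1$ internally vertex-disjoint $x$--$y$ paths, the $j$ vertices of $U^0=V(G)\setminus(V^1\cup V^2)$ can destroy at most $j$ of them, and a surviving path supplies a second edge between $V^1$ and $V^2$ --- contradiction. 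This bypasses entirely the induction you set up ("a $(j+2)$-connected subgraph on $V(H)$ survives every step") and all the bookkeeping you anticipate.

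The genuine gap in your proposal is its final step. Having deduced that the surviving subgraph $H'$ lies essentially on one side of the cut, you propose to conclude that deleting $e$ "preserves $(j+2)$-connectivity" via a minimum-degree argument. No such argument exists: deleting an edge from a $(j+2)$-connected graph drops the connectivity to $j+1$ in general (e.g.\ $K_{j+3}$ minus an edge), and minimum degree $\geq j+1$ does not imply high connectivity. Your fallback ("only $(j+1)$-connectivity survives") is also insufficient, since the next erase step is again a separation of order $j+1$ and may legitimately break a $(j+1)$-connected graph. The correct way to close your argument is to use both cuts $U^0\cup\{x\}$ and $U^0\cup\{y\}$: the first forces $V(H')\subseteq V^2\cup U^0\cup\{x\}$ or $V(H')\cap V^2=\emptyset$, the second forces $V(H')\subseteq V^1\cup U^0\cup\{y\}$ or $V(H')\cap V^1=\emptyset$. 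Intersecting the two first options traps $V(H')$ inside $U^0\cup\{x,y\}$, a set of only $j+2$ vertices, too small for a $(j+2)$-connected graph; in every remaining case one endpoint of $e$ lies outside $V(H')$, so $e\notin E(H')$ and the step does not touch $H'$ at all. With that repair your induction goes through, but the paper's one-step "first erased edge of $H$ plus the fan of $j+1$ disjoint paths" is the cleaner route.
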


\begin{proof}
Assume, towards the contradiction, there exists a $(j+2)$-connected subgraph $H\subset G$. Let $e$ be the first edge erased in $H$. Then, there exist disjoint sets $V^1,V^2$ such that $n-|V^1|-|V^2|=j$ and $e$ is the unique edge between $V^1,V^2$. Since $e$ is the first edge erased in $H$, after its deletion there are still $j+1$ vertex-disjoint paths between the end points of $e$. Therefore, deletion of $V(G)\setminus (V^1\cup V^2)$ should preserve at least one path --- a contradiction.
\end{proof}

It remains to apply the following result of Bernshteyn and Kostochka.

\begin{theorem}[Bernshteyn, Kostochka, 2016~\cite{BK}]
\label{th:BK}
Let $k\geq 2$. Then every graph with at least $\frac{5k}{2}$ vertices and at least $\frac{19k}{12}(|V(G)|-k)$ edges has a $(k+1)$-connected subgraph.
\end{theorem}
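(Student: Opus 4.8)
The plan is to prove Theorem~\ref{th:BK} by induction on $|V(G)|$, following the Mader--Yuster minimal-counterexample framework but sharpening the local analysis. Set $\alpha := \frac{19k}{12}$ and suppose the statement fails; let $G$ be a counterexample with $n := |V(G)| \ge \frac{5k}{2}$ smallest, so $G$ has no $(k+1)$-connected subgraph, $|E(G)| \ge \alpha(n-k)$, while every graph on fewer than $n$ but at least $\frac{5k}{2}$ vertices with no $(k+1)$-connected subgraph satisfies the bound. (It is convenient to carry a slightly strengthened induction hypothesis that also records a surplus term; I suppress this here.) First I would run the routine reductions. \textbf{(i)} $G$ is connected: otherwise either a component already has $\ge \frac{5k}{2}$ vertices and is itself a smaller counterexample, or all components are small and a short estimate --- using that no small component is a large clique, since $K_m$ with $m\ge k+2$ is $(k+1)$-connected --- contradicts $|E(G)| \ge \alpha(n-k)$. \textbf{(ii)} Away from the boundary $n = \lceil \frac{5k}{2}\rceil$, one gets $\delta(G) \ge \lceil \alpha \rceil$ (up to rounding): deleting a vertex of smaller degree leaves a graph on $n-1 \ge \frac{5k}{2}$ vertices, still with no $(k+1)$-connected subgraph, but with more than $\alpha(n-1-k)$ edges, contradicting minimality. \textbf{(iii)} The boundary case $n = \lceil \frac{5k}{2}\rceil$ is handled separately; it is already nontrivial, since $K_n$ is $(k+1)$-connected there, so the hypothesis must genuinely be used, but it reduces to a self-contained finite-type estimate.

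Since $G$ is not itself $(k+1)$-connected, it has a vertex-cut of size at most $k$. I would pick such a cut exploiting minimality of the smallest side: among all vertex-cuts $T$ of size $\le k$ and all components $A$ of $G-T$, choose $(A,T)$ with $|A|$ minimal and then $|T|$ minimal. Standard arguments then give: $A$ is connected, every vertex of $T$ has a neighbour in $A$, and --- the crucial consequence --- no vertex-cut of $G$ of size $\le k$ has a side that is a proper nonempty subset of $A$, so $G[A\cup T]$ is highly connected "from inside''. Set $G_1 := G[A\cup T]$ and $G_2 := G - A$; then $|V(G_1)| + |V(G_2)| = n + |T|$, $|E(G)| \le |E(G_1)| + |E(G_2)|$, and neither $G_i$ has a $(k+1)$-connected subgraph.

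\textbf{Case 1: $|V(G_1)|, |V(G_2)| \ge \frac{5k}{2}$.} The induction hypothesis gives $|E(G_i)| \le \alpha(|V(G_i)| - k)$, and summing with $|T| \le k$ yields $|E(G)| \le \alpha(|V(G_1)|+|V(G_2)|-2k) = \alpha(n+|T|-2k) \le \alpha(n-k)$, contradicting $|E(G)| \ge \alpha(n-k)$ --- in fact with slack $\alpha(k-|T|)$. \textbf{Case 2: some side is small}, say $|A|+|T| < \frac{5k}{2}$ (if $G_2$ is also small, treat it the same way; this forces $n < 5k$, a bounded range). Now $G_1 = G[A\cup T]$ is too small for induction but is sharply constrained: every vertex of $A$ has all $\ge \delta(G) \approx \alpha$ of its neighbours inside a set of size $< \frac{5k}{2}$, so $G_1$ is dense; yet $G_1$ has no $(k+1)$-connected subgraph and, by fragment-minimality, no vertex-cut of size $\le k$ cuts off a proper part of $A$. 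From these opposing constraints I would derive the tight bound $|E(G_1)| \le \alpha(|V(G_1)| - k) + \alpha(k - |T|)$, matching the extremal configurations (tree-like gluings of medium cliques along $(k-1)$-sets, as in Mader's lower bound~\cite{Mader}): recurse on the fragment decomposition of $G_1$ --- if $G_1$ is not "essentially $(k+1)$-connected'', it has a further small cut, split again --- until the pieces are so dense that, not being $(k+1)$-connected, they must be near-cliques on few vertices; then sum the contributions along the decomposition tree, controlled by a carefully chosen discharging scheme that redistributes the surplus of $\alpha$ over the average degree. Combined with the induction bound on $G_2$, this again contradicts $|E(G)| \ge \alpha(n-k)$.

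The main obstacle is Case 2: making the small-fragment estimate tight enough to produce the constant $\alpha = \frac{19k}{12}$ rather than the weaker values of Mader ($(1+1/\sqrt{2})k$) or Yuster ($\frac{193}{120}k$~\cite{Yuster}). This is a delicate, case-heavy optimization --- effectively a discharging argument on the fragment-decomposition tree of a small dense piece --- and calibrating the trade-off between "no $(k+1)$-connected subgraph'' (which caps how dense a piece can get before becoming highly connected) and the minimum-degree lower bound (which forces density) is where essentially all the difficulty lies; the inductive sum in Case 1 and the reductions of the first paragraph are comparatively routine. I would also expect the boundary case $n = \lceil \frac{5k}{2}\rceil$ and a handful of small-$k$ instances to need separate finite checks.
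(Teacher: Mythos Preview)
This theorem is not proved in the present paper at all: it is quoted verbatim from Bernshteyn and Kostochka~\cite{BK} and used as a black box (together with Claim~\ref{cl:connectivity_reduction}) to derive the lower bound~\eqref{eq:th3-2}. So there is no ``paper's own proof'' to compare your attempt against; you have set yourself the much harder task of reproving the external result that the paper merely cites.

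As for the content of your sketch: the global architecture --- minimal counterexample, $\delta(G)\gtrsim\alpha$, take a cut of size $\le k$, split, and induct --- is indeed the Mader framework that all of \cite{Mader,Yuster,BK} share, and your Case~1 computation is the standard one. But you have not actually supplied a proof, and you say so yourself: the entire difficulty is your Case~2, where you write that you ``would derive the tight bound'' via ``a delicate, case-heavy optimization'' and ``a carefully chosen discharging scheme'', without specifying either. That is precisely the part where the constant $\tfrac{19}{12}$ is won (versus Mader's $1+\tfrac{1}{\sqrt 2}$ or Yuster's $\tfrac{193}{120}$), and it requires a genuinely new structural ingredient, not just a sharper bookkeeping of the same decomposition. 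In~\cite{BK} the key idea is to work not with a single small fragment but with a system of ``crowns'' and to analyse how they overlap; your sketch does not contain any analogue of this, and the vague promise of a discharging argument on a fragment tree is not a substitute. So what you have written is a correct description of the \emph{shape} of such proofs, but it is not a proof: the decisive step is named and deferred, not carried out.
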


Indeed, Claim~\ref{cl:connectivity_reduction} and Theorem~\ref{th:BK} imply that $G$ has at most $\frac{19}{12}(j+1)(s+t-1)$ edges.

\section{Discussions and remaining questions}
\label{sc:further}

For $t+s+2\leq n\leq 3t-3$ and $3\leq s\leq t$, the exact value of $\mathrm{wsat}(n,K_{s,t})$ remains unknown. We propose the following conjecture.

\begin{conjecture}[Weak]
For all $2<s\leq t$ and $2\leq j\leq t-3$,
$$
 \mathrm{wsat}(s+t+j,K_{s,t})={s+t+j\choose 2}-j(s+t)-2t+O(j).
$$
\end{conjecture}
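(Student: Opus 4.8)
The plan splits into an upper and a lower bound on the number of edges of a largest $j$-erasable graph on $n := s+t+j$ vertices, which by the analogue of Claim~\ref{claim} equals $\binom{n}{2}-\mathrm{wsat}(s+t+j,K_{s,t})$. The upper side is already in hand: the graph constructed in Section~\ref{sc:3_proof} is $j$-erasable, has $\binom{n}{2}-j(s+t-2)-(2t-3)$ edges, and this matches the conjectured expression with the $O(j)$ term taken to be $2j+3$. So the content of the conjecture is the matching lower bound on $\mathrm{wsat}$, i.e. that every $j$-erasable graph on $n$ vertices has at most $j(s+t)+2t+O(j)$ edges; equivalently, I would need to improve Theorem~\ref{th:BK} on the class of $j$-erasable graphs, replacing the constant $\tfrac{19}{12}$ in front of $k(n-k)$ (with $k=j+1$) by essentially $1$. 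Note that $j(s+t)+2t+O(j)$ equals $(j+1)(s+t)+O(j)$ when $s=t$ but is strictly larger otherwise, so any proof must be asymmetric in $s$ and $t$ and cannot come from a purely connectivity-theoretic estimate; this is exactly the $\tfrac{7}{6}js$ discrepancy recorded in the discussion after Theorem~\ref{result_3}.

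For the lower bound I would generalise the hyperforest/semi-invariant machinery of Section~\ref{sc:pre}. At $j=1$ the relevant structure is the block tree ($2$-connected components), the semi-invariant $f_i+2c_i$ grows by at least $1$ at each erase step by Lemma~\ref{events}, and telescoping over an erase process gives $m\le s_m-s_0$, which together with a short case analysis (Lemma~\ref{auxiliary_lemma_for_Prop_2}) yields the sharp bound in Theorem~\ref{result_2}. For general $j$, each $j$-erase step is governed by a separator of size at most $j+1$ (this is the quantitative refinement of Claim~\ref{cl:connectivity_reduction}: removing the $j$ left-out vertices leaves $e$ as a bridge), so I would attach to each $G_i$ a hyperforest $H_i$ whose hyperedges induce connected subgraphs, cover $E(G_i)$, and respect the whole hierarchy of $k$-connected blocks for $k\le j+1$; $H_0$ refines $G_0$ along this hierarchy and $H_m$ consists of $n$ isolated vertices. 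I would then introduce a semi-invariant $s_i$, a positive linear combination of $c_i$ and of the block-counts at each level $1\le k\le j+1$ (with $c_i$ carrying weight $j+1$, mirroring the weight $2$ at $j=1$), chosen so that one $j$-erase step --- edge-operation followed by the vertex-operations at the $j$ separator vertices --- always increases $s_i$ by at least $1$.

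Granting such a growth lemma, the proof would finish as in the $j=1$ case: telescope to $m\le s_m-s_0$; compute the final value $s_m=\Theta((j+1)n)$ exactly; bound $s_0$ using the fact that a $K_{s,t}$-free graph on $s+t+j$ vertices cannot have too elaborate a block hierarchy, where the slack between $(j+1)(s+t)$ and $j(s+t)+2t$ must come from the small blocks that survive into the endgame (blocks whose sizes are controlled by $s+1$ and $t+1$, exactly as $s$ and $t$ enter the endgame analysis of Theorems~\ref{result_1} and~\ref{result_2}); and run a bounded case analysis showing that only $O(j)$ steps are \emph{expensive}, i.e. have $s_i-s_{i-1}\ge 2$.

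The main obstacle is, unsurprisingly, the design of $s_i$ together with the proof that it grows by at least $1$ at every step. At $j=1$ the block structure is literally a tree and a $j$-erase step performs one of a short list of elementary surgeries on it; for $j\ge 2$ the block hierarchy is a laminar family with up to $j$ levels, a single step with a size-$(j+1)$ separator can split blocks on several levels simultaneously, and it is far from obvious that a fixed choice of weights makes the net effect a gain of at least $1$ --- this is precisely the ``more complex combinatorial structures and more parameters'' the authors defer. Closing the $O(j)$ gap in the second-order term additionally requires the correct asymptotics of $s_0$, i.e. a structural description of $K_{s,t}$-free graphs of order $s+t+j$ that realises the asymmetric $-2t$ saving; I expect that this, rather than the bookkeeping, is where ideas genuinely beyond the present paper are needed, which is presumably why the result is stated only as a conjecture.
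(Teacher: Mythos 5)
This statement is a \emph{conjecture} in the paper; the authors do not prove it, and the strongest results they obtain in its direction are the bounds of Theorem~\ref{result_3}, which leave a multiplicative gap of roughly $\tfrac{19}{12}$ versus $1$ in the coefficient of $(j+1)(s+t)$. Your proposal is therefore not comparable to a proof in the paper --- there is none --- and, as you yourself acknowledge, it is a programme rather than a proof. Your reading of the situation is accurate: the upper bound on $\mathrm{wsat}$ is indeed already supplied by the construction in Section~\ref{sc:3_proof} (whose \emph{complement} is the $j$-erasable graph, with $j(s+t-2)+2t-3$ edges, matching the conjectured expression with $O(j)=2j+3$), and the entire difficulty is the matching upper bound on the size of a $j$-erasable graph. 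You are also right that no purely connectivity-theoretic bound can close the gap: Mader's extremal examples show that graphs without $(j+2)$-connected subgraphs can have $\tfrac{3}{2}(j+\tfrac{2}{3})(n-j-1)$ edges, so even the symmetric case $s=t$ requires exploiting erasability beyond Claim~\ref{cl:connectivity_reduction}, not only the asymmetric $-2t$ term.

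The genuine gaps are the two you name, and they are not minor. First, the growth lemma: you posit a semi-invariant $s_i$ built as a weighted sum of $c_i$ and block-counts at levels $1\le k\le j+1$, with weight $j+1$ on $c_i$, and assert that each $j$-erase step increases it by at least $1$. Nothing in the paper's Lemma~\ref{events} generalises automatically: for $j\ge 2$ a single separator of size $j+1$ can simultaneously merge, split, and destroy blocks at several levels of the laminar hierarchy, and the sign of the net change under a fixed weighting is exactly what is unknown. Until a concrete hypergraph-valued refinement of $H_i$ is defined and its one-step dynamics classified (the analogue of Lemma~\ref{events}), the telescoping inequality $m\le s_m-s_0$ has no content. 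Second, even granting such a lemma, the conjectured second-order term requires (i) the exact value of $s_m-s_0$, which depends on a structural description of the initial block hierarchy of a $K_{s,t}$-free complement on $s+t+j$ vertices that nobody has, and (ii) a proof that only $O(j)$ steps are expensive, which at $j=1$ already consumed the bulk of Sections~\ref{sc:1_proof}--\ref{sc:2_proof} and whose case analysis grows with the number of block levels. So the proposal is a reasonable restatement of the authors' own declared research direction, but it proves nothing beyond what Theorem~\ref{result_3} already gives.
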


It seems plausible that the $O(j)$-term equals $2j+O(1)$. At least this is the case when $j\in\{1,2\}$. Therefore, we believe that, even if the Mader's conjecture on the value of $\varphi_n(k)$ is true for all $k$, it does not imply a tight lower bound on $\mathrm{wsat}(s+t+j,K_{s,t})$, as it happens in the case $j=1$. It is also of interest to further explore the case $j=1$ and $s\neq t$ to determine all pairs $(s,t)$ where 
\begin{equation}
\mathrm{wsat}(s+t+1,K_{s,t})={s+t+1\choose 2}-(2s+2t-2)
\label{eq:gcd-1}
\end{equation} 
as in the case $\gcd(s,t)=1$. Note that, as follows from the result of~\cite{Miralaei2023}, for $s=2$, the equality~\eqref{eq:gcd-1} hold if and only if $t$ is odd. Nevertheless, there are pairs $(s\geq 3,t>s)$ such that $\gcd(s,t)>1$ but~\eqref{eq:gcd-1} holds as well: we conducted computer simulations and found several constructions that improve the upper bound in Theorem~\ref{result_1} and give the answer~\eqref{eq:gcd-1} for $(s,t)\in\{(4,6),(6,8),(8,10)\}$. The respective erasable graphs are presented in Appendix~\ref{appendix}. 

%for small values of $s$ and $t$, with results summarised in Figure~\ref{Table} {\color{red} Remove figure, add constructions for $K_{4,6},K_{6,8},K_{8,10}$}, which might provide reference points for further investigation. 

More ambitiously, we propose the following for balanced bipartite graphs.

\begin{conjecture}[Strong]
For all $s>2$ and $2\leq j\leq s-3$,
$$
\mathrm{wsat}(2s+j,K_{s,s})={2s+j\choose 2}-(j+1)(2s-2).
$$
\end{conjecture}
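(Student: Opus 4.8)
The plan is to prove the conjectured value by establishing matching upper and lower bounds, working with the edge-complement as throughout the paper. Exactly as in Claim~\ref{claim}, the $j$-erase reduction of Section~\ref{sc:2_lower_proof} shows that $\binom{2s+j}{2}-\mathrm{wsat}(2s+j,K_{s,s})$ equals $f_j(2s+j)$, the maximum number of edges in a $j$-erasable graph on $2s+j$ vertices, so it suffices to prove $f_j(2s+j)=(j+1)(2s-2)$.

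For the upper bound one must exhibit a $j$-erasable graph on $2s+j$ vertices with $(j+1)(2s-2)$ edges, which is one more than the general-$K_{s,t}$ construction of Theorem~\ref{result_3} supplies after setting $s=t$. I would generalise the $j=1$ construction used in the proof of Theorem~\ref{result_1}: there one takes a $4$-cycle $(a,b,c,d)$ --- a graph that is $2$-connected but not $3$-connected --- and hangs two disjoint paths off $a$ and off $b$. For general $j$ I would replace $C_4$ by a small core $W$ on $\Theta(j)$ vertices that is $(j+1)$-connected but not $(j+2)$-connected (a power of a cycle, or $K_{j+3}$ with a perfect matching removed, are natural candidates), designate two vertices of $W$ as $a,b$, and attach two pendant paths; the lengths of the paths are then chosen so that the total number of edges is exactly $(j+1)(2s-2)$, using that for $s=t$ the two "sides" can be made symmetric and one need not pay for the asymmetric attachment that cost the extra edge in Theorem~\ref{result_3}. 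Erasability would be checked by an explicit schedule: dismantle the core using the $j$ vertices left outside the partition at each step, then peel the two pendant paths vertex by vertex, mimicking Figure~\ref{Int}.

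The lower bound is the real content. For $j=1$ the proof of Theorem~\ref{result_1} assigns to each graph in the erase process a single hyperforest whose hyperedges cannot cross $2$-connected components, and tracks the semi-invariant $s_i=f_i+2c_i$, where $c_i$ counts connected components (level $1$) and $f_i$ counts hyperedges (level $2$); Lemma~\ref{events} and Corollary~\ref{S(H)} show $s_i$ grows by at least $1$ per step. For general $j$ I would record, for each $\ell\in[j+1]$, the decomposition of the current graph into "level-$\ell$ pieces" (pieces that do not cross $(\ell+1)$-connected subgraphs; by Claim~\ref{cl:connectivity_reduction} the level-$(j+1)$ decomposition is already the coarsest relevant one, since the complement of a weakly $K_{s,s}$-saturated graph on $2s+j$ vertices has no $(j+2)$-connected subgraph), packaged either as a tower of hyperforests or as one hyperforest with typed hyperedges. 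Writing $c_i^{(\ell)}$ for the number of level-$\ell$ pieces, I would look for coefficients $\alpha_1>\cdots>\alpha_{j+1}$ --- a natural guess, matching $\alpha_1=2,\alpha_2=1$ for $j=1$, being $\alpha_\ell=j+2-\ell$ --- such that $s_i:=\sum_{\ell=1}^{j+1}\alpha_\ell c_i^{(\ell)}$ satisfies $s_i\ge s_{i-1}+1$ at every $j$-erase step. Proving this monotonicity, i.e.\ understanding how a single edge deletion that breaks an $\ell$-connected piece ($\ell\le j+1$) updates all levels at once, is the first hard step and the natural generalisation of Lemma~\ref{events}.

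Granting the monotonicity, the telescoping $m=s_m-s_0-Q$ with $Q:=\sum_i(s_i-s_{i-1}-1)\ge 0$ and endpoint value $s_m=\alpha_1(2s+j)=(j+1)(2s+j)$ reduces everything to a lower bound on $Q$: if the initial hyperforest has $s_0\ge (j+1)(j+2)$ we are done immediately, and otherwise (a bounded list of "dense" initial configurations, e.g.\ $G_0$ connected with $s_0=\binom{j+2}{2}$) one needs $Q\ge (j+1)(j+2)-s_0$, which for connected $G_0$ is $Q\ge\binom{j+2}{2}$ --- precisely the $Q\ge 3$ appearing in the $j=1$ proof. Showing this is the main obstacle: it is the analogue of Lemma~\ref{auxiliary_lemma_for_Prop_2} together with the case analysis in the proof of Theorem~\ref{result_1}, and the number of cases --- tracking how "expensive" steps (where $s_i$ jumps by more than $1$) are forced, and that exactly $\binom{j+2}{2}-O(1)$ units of excess accumulate --- grows with the number of connectivity levels. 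I expect this to be genuinely delicate, which is presumably why the authors only conjecture the statement; a cheaper partial result along the way is that the same bookkeeping combined with the Mader/Bernshteyn--Kostochka bounds used for Theorem~\ref{result_3} already pins down the second-order term of $f_j(2s+j)$ up to a multiplicative constant, and the Mader conjecture, if true, would give it up to an additive constant.
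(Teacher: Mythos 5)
The statement you are addressing is stated in the paper only as a conjecture (the ``Strong'' conjecture in Section~\ref{sc:further}); the paper offers no proof of it, so there is nothing to compare your argument against --- and your proposal, as you yourself concede, is a research programme rather than a proof. Both halves have genuine gaps. On the upper bound, you never actually exhibit a $j$-erasable graph on $2s+j$ vertices with $(j+1)(2s-2)$ edges: the construction of Theorem~\ref{result_3} gives only $(j+1)(2s-2)-1$ edges at $s=t$, and finding the one extra edge is not a routine symmetrisation --- the paper's own experience at $j=1$ (where the extra edge exists iff a gcd condition holds, and where even for $s\neq t$ the authors needed computer search to find the improved examples in Appendix~\ref{appendix}) shows that constructions at this exact boundary are delicate. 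Your proposed core $W$ ($(j+1)$-connected but not $(j+2)$-connected) is a reasonable ansatz, but neither the edge count nor the erasability schedule is verified.

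On the lower bound, the two load-bearing claims are both unproved. First, the monotonicity $s_i\geq s_{i-1}+1$ for the weighted multi-level invariant $s_i=\sum_\ell \alpha_\ell c_i^{(\ell)}$ is only a guess at the right generalisation of Lemma~\ref{events}; unlike the $j=1$ case, a single $j$-erase step can reorganise pieces at several connectivity levels simultaneously, and it is not clear that any fixed coefficient vector $(\alpha_\ell)$ makes the sum monotone (nor is it specified how ``level-$\ell$ pieces'' are to be defined so that they nest into a tractable hyperforest-like structure; the paper's Section~\ref{sc:hyperforests} construction uses the witness partition in an essential way that has no stated analogue for $\ell\geq 2$). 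Second, the required excess $Q\geq\binom{j+2}{2}$ for connected $G_0$ is exactly the analogue of Lemma~\ref{auxiliary_lemma_for_Prop_2} plus the case analysis of Section~\ref{sc:1_proof}, and the paper explicitly warns that pure connectivity counting (Mader/Bernshteyn--Kostochka) cannot recover the additive constant --- so this forced-excess argument is where all the difficulty is concentrated, and deferring it means the conjecture remains open. Your proposal is a sensible statement of the plan the authors themselves sketch in the ``method'' paragraph and in Section~\ref{sc:further}, but it does not constitute a proof.
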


As we mention in Introduction, we expect that the linear-algebraic approach does not give a tight lower bound on $\mathrm{wsat}(s+t+j,K_{s,t})$. More generally, we would like to ask the following question. Is it true that for every pair $(s,t)$ such that $3\leq s\leq t$ and for sufficiently small $j\geq 1$, the weak $K_{s,t}$-saturation rank $\mathrm{rk}\text{-}\mathrm{sat}(s+t+j,K_{s,t})$ is strictly smaller than the weak $K_{s,t}$-saturation number $\mathrm{wsat}(s+t+j,K_{s,t})$ (see the definition of the weak saturation rank and the relevant background on the theory of matroids in~\cite{TZ:matroids})?

We conclude this section with a research direction that lies somewhat outside the scope of the present paper and concerns the case where $s$ and $t$ are small compared to $n$. In~\cite{Kalinichenko}, it is proved that, for every constant $p\in(0,1)$, the weak $K_{s,t}$-saturation number is stable with high probability (whp). More formally, for every $1\leq s\leq t$, whp $\mathrm{wsat}(G_{n,p},K_{s,t})=\mathrm{wsat}(n,K_{s,t})$, where $G_{n,p}$ is a binomial random graph with edge probability $p$. Here, $\mathrm{wsat}(H,F)$ is a generalisation of $\mathrm{wsat}(n,F)$, where the added edges must belong to $E(H)$.  Kor\'{a}ndi and Sudakov~\cite{KorSud} initiated the study of weak saturation in random graphs: they proved the stability property for $F=K_s$ and asked to determine the threshold for the stability property. In~\cite{CPSTZ}, it was proved that the threshold for a triangle $F\cong K_3$ equals $n^{-1/3+o(1)}$ via a topological approach. Some non-trivial bounds on the threshold for $F\cong K_s$, $s\geq 4$, were obtained in~\cite{BMTZ}. The threshold for stars $F\cong K_{s=1,t}$ was determined in~\cite{Kalinichenko}. For all the other $s\geq 2$, no non-trivial bounds on the threshold are known. The next interesting case to investigate is a 4-cycle $F\cong K_{2,2}$.

\begin{comment}
    \begin{cases}
\binom{s+t+1}{2}-(2s+2t-2) &\text{for } (s,t)=1, \\
 \binom{s+t+1}{2}-(2s+2t-2)~~or~~ \binom{s+t+1}{2}-(2s+2t-3) &\text{for } (s,t)\neq 1.
\end{cases}
\end{comment}

%\renewcommand{\refname}{References}
\bibliographystyle{abbrv}
\bibliography{references}

\begin{appendix}
\section{Examples of erasable graphs}
\label{appendix}

\begin{itemize}

\item $s=4$, $t=6$. An erasable graph on 11 vertices and 18 edges is defined as follows. Let $P_a=(a_1a_2a_3a_4)$ and $P_b=(b_1b_2b_3b_4)$ be two vertex-disjoint paths and let vertices $a$ and $b$ be adjacent to all vertices in $P_a$ and $P_b$, respectively. It remains to add a vertex $c$ that is adjacent to $a_1,a_4,b_4$, and draw the edge $\{a_4,b_2\}$.

\item $s=6$, $t=8$. An erasable graph on $\{1,\ldots,15\}$ has the following set of 26 edges: $\{1,6\}$, $\{1,8\}$, $\{2,10\}$, $\{2,12\}$, $\{3,4\}$, $\{3,5\}$, $\{3,13\}$, $\{3,14\}$, $\{4,7\}$, $\{4,12\}$, $\{4,14\}$, $\{5,7\}$, $\{5,9\}$, $\{5,10\}$, $\{6,8\}$, $\{6,10\}$, $\{6,12\}$, $\{7,9\}$, $\{7,14\}$, $\{8,10\}$, $\{8,15\}$, $\{10,12\}$, $\{10,15\}$, $\{11,13\}$, $\{12,13\}$, $\{13,14\}$.

\item $s=8$, $t=10$. An erasable graph on $\{1,\ldots,19\}$ has the following set of 34 edges: $\{1,3\}$, $\{1,6\}$, $\{1,9\}$, $\{1,11\}$, $\{1,16\}$, $\{1,17\}$, $\{1,18\}$, $\{2,7\}$, $\{2,9\}$, $\{2,10\}$, $\{2,14\}$, $\{3,8\}$, $\{3,11\}$, $\{3,18\}$, $\{4,9\}$, $\{4,13\}$, $\{4,14\}$, $\{4,15\}$, $\{5,6\}$, $\{5,8\}$, $\{6,11\}$, $\{7,13\}$, $\{8,17\}$, $\{8,19\}$, $\{9,14\}$, $\{9,15\}$, $\{9,19\}$, $\{10,13\}$, $\{10,17\}$, $\{11,17\}$, $\{12,14\}$, $\{12,15\}$, $\{13,14\}$, $\{18,19\}$.

\end{itemize}

\end{appendix}

\begin{comment}

\end{comment}

\end{document}